\DeclareMathAlphabet{\zap}{OT1}{pzc}{m}{it}
\def\be{\begin{equation}}
\def\ee{\end{equation}}
\def\bea{\begin{eqnarray*}}
\def\eea{\end{eqnarray*}}
\def\vol{~d\mu}
\newtheorem{main}{Theorem}
\DeclareMathOperator{\Aut}{Aut}
\DeclareMathOperator{\Int}{Int}
\DeclareMathOperator{\grad}{grad}
\numberwithin{equation}{section}
\newtheorem{thm}{Theorem}[section]
\newtheorem{lem}[thm]{Lemma}
\newtheorem{prop}[thm]{Proposition}
\newtheorem{cor}[thm]{Corollary}
\def\umu{\mbox{\textmu}}
\newenvironment{proofchiaro}{\medskip
{\sc Proof of Theorem \ref{chiaro}.}}{\hfill $\square$ 
\\}
\newenvironment{xpl}{\mbox{ }\\ {\bf  Example}\mbox{ }}{
\hfill $\diamondsuit$\mbox{}\bigskip}
\def\CC{\mathbb C}
\def\ZZ{{\mathbb Z}}
\def\RR{{\mathbb R}}
\def\QQ{{\mathbb Q}}
\def\CP{{\mathbb C \mathbb P}}
\def\proj{\mbox{\thorn}}
\begin{document}

\title{Calabi Energies of  Extremal Toric Surfaces}
%
%\author{Claude LeBrun\thanks{Supported 
%in part by  NSF grant DMS-0905159.}\\SUNY Stony
% Brook}

\author{Claude LeBrun}
\address{Department of Mathematics\\
SUNY at Stony Brook\\
Stony Brook, NY 11794-3651}
\email{claude@math.sunysb.edu}
\thanks{Supported 
in part by    NSF grants DMS-0905159 and DMS-1205953.}

\date{}
\maketitle

 \begin{abstract}	
 We derive a formula for the $L^2$ norm of 
 the scalar curvature of any extremal  K\"ahler metric
 on a compact toric   manifold, stated purely in terms of the 
 geometry of the corresponding moment polytope. 
 The main  interest of this formula pertains
 to the case of complex dimension $2$, where it plays a 
 key role in construction of  Bach-flat metrics
on appropriate $4$-manifolds.     \end{abstract}

\section{Introduction}

In an audacious  attempt to endow  complex algebraic varieties with 
 canonical Riemannian metrics,  Eugenio Calabi \cite{calabix}
initiated  a systematic study of the squared $L^2$-norm
\begin{equation}\label{scalar}
{\mathcal C}(g) =  \int_M s^2~d\mu
\end{equation}
of the scalar curvature, considered as a functional 
on the space of K\"ahler metrics $g$ on a given compact complex manifold $(M,J)$;
here $s$ and $d\mu$ of course  denote the scalar curvature and Riemannian 
volume form of the given metric  $g$.  Given a 
 K\"ahler class
$\Omega \in H^{1,1} (M,\RR)\subset 
H^{2} (M,\RR)$, his aim was to minimize the functional ${\mathcal C}(g)$ among all 
K\"ahler metrics $g=\omega(\cdot , J\cdot)$ with K\"ahler class  $[\omega ]=\Omega$. 
Calabi showed that the   Euler-Lagrange equation for this variational problem
is equivalent to requiring that
$\nabla^{1,0}s$ be a holomorphic vector field, and he introduced the 
terminology  {\em extremal K\"ahler metrics}  for the  solutions of  this equation. 
It was  later shown \cite{xxel}  that 
any extremal K\"ahler metric on a compact complex manifold
actually minimizes the Calabi energy (\ref{scalar})  in its K\"ahler class. Moreover, 
when such a   minimizer exists, it is  actually unique in its
K\"ahler class, modulo automorphisms of the complex manifold 
\cite{xxgang2,donaldsonk1,mabuniq}. Our knowledge of 
existence remains imperfect, but  considerable 
progress \cite{arpasing,chenlisheng,dontor} has recently been made in the toric case that is focus 
of the present paper. However, a relatively elementary argument \cite{ls2} shows  that
the set of K\"ahler classes represented by extremal K\"ahler metrics on 
a compact complex manifold 
$(M, J)$ is necessarily 
 open   in $H^{1,1} (M,\RR)$. 

Rather than minimizing the squared $L^2$-norm of the scalar curvature, as in (\ref{scalar}), 
one might be tempted to instead minimize the squared $L^2$-norm of, say, the Riemann curvature
tensor or the Ricci tensor. However, 
Calabi also observed \cite{calabix} that, after appropriate normalization, 
 such functionals only differ from (\ref{scalar}) by a constant depending 
 on the K\"ahler class. In this respect, real dimension four occupies 
 a privileged position; not only does (\ref{scalar}) become scale invariant 
 in this dimension, but the relevant constants  only depend on the topology of 
 $M^4$, and so are independent of the K\"ahler class in question. For example, 
 the Riemann curvature $\mathcal R$ and the Ricci tensor $r$ satisfy
 \begin{eqnarray*}
\int_M |\mathcal R |^2d\mu &=& -8\pi^2( \chi +3\tau ) (M) + \frac{1}{4}{\mathcal C}(g)
\\ 
\int_M | r |^2d\mu &=& -8\pi^2( 2\chi +3\tau ) (M) + \frac{1}{2}{\mathcal C}(g)
\end{eqnarray*}
for any compact K\"ahler manifold $(M,g,J)$ of complex dimension $2$,
where $\chi(M)$ and $\tau (M)$ are respectively the Euler characteristic and
signature of the compact oriented $4$-manifold $M$. 
Similarly, the Weyl curvature $W$, which is the conformally invariant part  
of the Riemann tensor $\mathcal R$, satisfies 
\begin{equation}\label{weyl}
\int_M | W |^2d\mu = -12\pi^2 \tau (M) +  \frac{1}{12}{\mathcal C}(g)
\end{equation}
for any compact K\"ahler surface $(M,g,J)$. Thus, 
if a K\"ahler metric $g$ on  $M^4$ 
 is a critical point of any of these Riemannian functionals, considered  as 
 a function on the bigger space of all Riemannian metrics on $M$, it must, in particular,  be 
 an {\em extremal} K\"ahler metric. In connection with (\ref{weyl}), 
this observation has interesting consequences, some of which will be touched on 
in this article.

The primary goal of this article is to  calculate the Calabi energy of any 
extremal K\"ahler metric on any {\em  toric surface} --- that is, 
on any simply connected compact complex manifold of complex dimension two   
which carries  a compatible effective action of the $2$-torus $T^2=S^1\times S^1$. Any 
K\"ahler class on a toric surface is represented by a $T^2$-invariant K\"ahler metric, and, relative to such a metric, the action  is  generated by two periodic Hamiltonian vector fields. 
This pair of Hamiltonians gives us an  $\RR^2$-valued 
moment map, under which the image of our complex surface 
is a convex polygon $P\subset \RR^2$. Moreover, modulo    translations and  $SL(2, \ZZ)$  transformations, 
the moment polygon $P$  only depends on the  given the K\"ahler class.
Euclidean area measure on the 
interior of $P$ then allows us to define a barycenter for $P$  and a moment-of-inertia
matrix $\Pi$ of $P$ relative to this barycenter. 
The edges of $P$ have rational slope,  and  are  therefore endowed 
with  preferred rescalings  $d\lambda$ of $1$-dimensional Lebesgue measure,
chosen so that    intervals of unit length correspond to  separation
 vectors  which are indivisible  elements of the 
 integer lattice $\ZZ^2$. This allows us to also define a barycenter of
 the perimeter of  $P$, and hence also   a vector $\vec{\mathfrak D}\in \RR^2$
 connecting the barycenter of the interior to the barycenter of the perimeter. 
 Combining these ingredients, we then obtain a convenient formula
 for the Calabi energy of any extremal toric surface: 
 
\begin{main} \label{behold}
 Let $(M, J, \Omega)$ be a toric surface with fixed K\"ahler class, and let
 $P$ be the associated moment polygon. Then 
 any K\"ahler metric $g$ with K\"ahler form $\omega \in \Omega$  has
scalar curvature $s$ satisfying 
$$
\frac{1}{32\pi^2}\int_M s^2 d\mu_g \geq 
\frac{|\partial P|^2}{2}\Big( \frac{1}{|P|~} + \vec{\mathfrak D}\cdot  {\Pi}^{-1} \vec{\mathfrak D}
\Big)
$$
with 
equality iff $g$ is an extremal K\"ahler metric. Here $|P|$ denotes the area of 
the interior of  $P$, $|\partial P|$ is the $\lambda$-length of its boundary,  
 $\Pi$ is the moment-of-inertia
matrix of $P$, and $\vec{\mathfrak D}$ is the vector joining  the barycenter
$P$ to  the barycenter of $\partial P$. 
\end{main}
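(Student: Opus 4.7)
The plan is to express both sides as integrals over the moment polygon $P$ and to bound $\int_P s^2\, dx$ from below by $\int_P A^2\, dx$, where $A$ is a canonical affine function on $P$ read off directly from the polygon data.

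In the Abreu-Donaldson symplectic description, any torus-invariant K\"ahler metric $g \in \Omega$ is encoded by a strictly convex symplectic potential $u$ on $\Int(P)$, and Abreu's formula for the scalar curvature, combined with two integrations by parts, yields the Donaldson-Guillemin identity
\begin{equation*}
\int_P s f \, dx \, + \, \int_P \sum_{i,j} u^{ij} \frac{\partial^2 f}{\partial x^i \partial x^j} \, dx \, = \, 2 \int_{\partial P} f \, d\lambda
\end{equation*}
for every smooth $f$ on $P$. For affine $f$ the middle term vanishes, leaving
\begin{equation*}
\int_P s f \, dx = 2 \int_{\partial P} f \, d\lambda .
\end{equation*}
Since the pairing $(f_1,f_2)\mapsto \int_P f_1 f_2 \, dx$ is positive definite on the three-dimensional space of affine functions on $P\subset \RR^2$, there is a unique affine $A$ on $P$ satisfying this same identity with $A$ in place of $s$; this is the extremal affine function of $(P,d\lambda)$.

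Testing the defining identity for $A$ against $f=1$ and $f=x^i-x_c^i$, with $x_c$ the barycenter of $P$, determines $A = a_0 + \vec{a}\cdot(x-x_c)$ explicitly, with
$$a_0 = \frac{2|\partial P|}{|P|} , \qquad \vec{a} = 2|\partial P|\, \Pi^{-1}\vec{\mathfrak D} .$$
Expanding, and noting that the cross term vanishes at the barycenter,
$$\int_P A^2 \, dx = a_0^2 |P| + \vec{a}\cdot \Pi \vec{a} = 4|\partial P|^2 \Big(\frac{1}{|P|} + \vec{\mathfrak D}\cdot \Pi^{-1}\vec{\mathfrak D}\Big) .$$
Combining the two identities gives $\int_P s A \, dx = \int_P A^2 \, dx$, so $s-A$ is $L^2(P,dx)$-orthogonal to $A$, and the Pythagorean theorem gives $\int_P s^2 \, dx \geq \int_P A^2 \, dx$, with equality iff $s \equiv A$. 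The Duistermaat-Heckman push-forward $\int_M h \, d\mu = (2\pi)^2 \int_P h \, dx$ (valid for torus-invariant $h$) then converts this polygon inequality into the claimed inequality on $M$, the numerical factor $(2\pi)^2/(32\pi^2) = 1/8$ absorbing the constant $4$ in $\int_P A^2 \, dx$. The equality case $s\equiv A$ identifies $g$ as extremal: under the Abreu-Guillemin correspondence, affine functions on $P$ pull back to Killing potentials of the torus action, so affineness of $s$ is equivalent to $\nabla^{1,0}s$ being a holomorphic vector field.

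The one step that requires more than symplectic bookkeeping is the removal of the \emph{a priori} torus-invariance hypothesis on $g$. The natural route is to invoke the minimization theorem \cite{xxel} --- extremal K\"ahler metrics minimize the Calabi energy within their K\"ahler class --- together with the fact that on a toric surface any K\"ahler class supporting an extremal metric supports a torus-invariant one, by the uniqueness of extremal metrics modulo automorphisms. For K\"ahler classes not known to admit an extremal metric, the inequality should instead follow from a Donaldson-type lower bound on the Calabi energy via a destabilizing test configuration built from $A$, and this is where I expect the substantive technical work to reside.
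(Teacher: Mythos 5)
Your argument for torus-invariant metrics is correct, and it reaches the identity of Theorem \ref{chiaro} by a genuinely different route from the paper. You work entirely on the polygon, using the Abreu--Guillemin symplectic potential and Donaldson's integration-by-parts identity, so that the key fact $\int_P s f\, d{\zap a} = 2\int_{\partial P} f\, d\lambda$ for affine $f$ falls out of $u^{ij}f_{,ij}=0$; the paper instead packages exactly the same information as a computation of the Futaki invariant, $\vec{\mathfrak F}=-4\pi\,|\partial P|\,\vec{\mathfrak D}$ (Proposition \ref{stage}), obtained by fibering $M$ over the stable quotient $\Sigma\cong \CP_1$ and integrating the scalar curvature density $\bigl[2\check\rho-\tfrac{d^2}{dt^2}\check\omega\bigr]\wedge dt\wedge\theta$ against the Hamiltonian. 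Your extremal affine function $A$ is precisely $\bar s$ plus the Hamiltonian dual to ${\mathfrak F}(\Omega)$ under the moment-of-inertia form, and your Pythagorean step is the paper's orthogonal projection of $s-\bar s$ onto the normalized Killing Hamiltonians of $\mathfrak t$; the constants match (your $\tfrac18\int_P A^2\,d{\zap a}$ equals $\tfrac{(c_1\cdot\Omega)^2}{\Omega^2}+\tfrac{1}{32\pi^2}\|{\mathfrak F}(\Omega)\|^2$), and the $2\pi$-periodic normalization you implicitly adopt is harmless because the right-hand side of the theorem is invariant under rescaling $P$. For the equality case you should also note that, conversely, an invariant extremal metric has affine $s$ because the extremal field commutes with $\mathbf T$ and $\mathbf T$ is a maximal torus (Lemma \ref{max}). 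Your route is shorter and more self-contained for invariant metrics; the paper's route produces the objects needed for the step you leave open.

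That step --- removing the invariance hypothesis --- is where your proposal has a genuine gap, which you acknowledge. Your first repair (Chen's minimization theorem plus uniqueness) only covers K\"ahler classes already known to contain an extremal metric, whereas Theorem \ref{behold} is asserted for every K\"ahler class on every toric surface, and existence is not known in general (the paper says as much in the introduction). The correct tool is not a new test-configuration argument but the sharp form of Chen's theorem \cite{xxel} quoted as inequality (\ref{minoree}): $\int_M(s-\bar s)^2\,d\mu\geq\|{\mathfrak F}(\Omega)\|^2$ for \emph{every} K\"ahler metric in $\Omega$, with equality iff $g$ is extremal. To apply it, however, you must identify your polygon quantity $\int_P(A-a_0)^2\,d{\zap a}$ with $\|{\mathfrak F}(\Omega)\|^2$ as defined by the Futaki--Mabuchi bilinear form on the Lie algebra $\mathfrak g$ of a full maximal compact subgroup, not merely on $\mathfrak t$. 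That identification is exactly the content of Propositions \ref{sauce}, \ref{centerville} and \ref{bump}: one needs to know that ${\mathfrak F}(\Omega)$ annihilates the unipotent radical (Nakagawa's theorem, extended to irrational classes in Proposition \ref{reduce}) and the derived subalgebra $[{\mathfrak g},{\mathfrak g}]$, so that it lies in ${\mathfrak t}^*$ and its $\mathbb B$-norm is computed by the moment-of-inertia matrix alone. Without this reduction, projecting onto the three-dimensional space of affine functions gives a valid lower bound only for $\mathbf T$-invariant $g$, and the ``equality iff extremal'' clause for arbitrary $g$ is not reached.
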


We give two proofs of this result. Our first proof, which is specifically adapted to 
complex dimension $2$, can be found in \S \ref{focus} below. Then, in \S \ref{abreu}, we  
prove a generalization, Theorem \ref{behind},  which holds for  
toric manifolds of  arbitrary complex dimension. 
 However, both proofs crucially depend on a detailed understanding  of both 
the Futaki invariant and  toric manifolds. 
We have therefore found it useful to preface our main calculations with 
a careful exploration  of the underpinnings  of these ideas. The article then 
concludes with a discussion of 
examples that illustrate  our current knowledge   of Bach-flat K\"ahler metrics.

\section{The Futaki Invariant}

If  $(M^{2m},J)$ is a compact complex $m$-manifold of K\"ahler type, 
and if  $${\mathfrak h}=H^0(M, {\mathcal O}(T^{1,0}M))$$ is
the associated Lie algebra of holomorphic fields on $M$, 
the {\em Futaki invariant} assigns an element ${\mathfrak F} (\Omega)$ of  the Lie coalgebra 
${\mathfrak h}^*$ to every 
 K\"ahler class
$\Omega$ on $(M,J)$. 
To   construct this element, let  $g$ be a K\"ahler metric, 
 with K\"ahler class  $[\omega] = \Omega$, scalar curvature $s$, Green's operator ${\zap G}$, and volume form $d\mu$. We   then define the Futaki invariant 
$${\mathfrak F} (\Omega) :  H^0(M, {\mathcal O}(T^{1,0}M))\longrightarrow \CC$$
to be the linear functional 
$$\Xi \longmapsto -2 \int_M \Xi ({\zap G}s)\, d\mu\, . $$
It is a remarkable fact, due to Futaki \cite{futaki0}, Bando \cite{bando}, 
and Calabi \cite{calabix2}, that 
${\mathfrak F} (\Omega)$ only depends on the K\"ahler class $\Omega$, and not
on the particular metric $g$ chosen to represent it. 

We will now assume henceforth  that $b_1(M)=0$. Since $(M,J)$ is of
K\"ahler type, the Hodge decomposition then tells us that $H^{0,1}(M)=0$,
and it therefore follows \cite{calabix2,ls} that every holomorphic vector field
$\Xi$ on $M$ can be written as $\nabla^{1,0}f$ for some smooth function $f=f_\Xi$, called a
holomorphy potential. This allows us to re-express the Futaki invariant as
\begin{equation}
\label{foot} 
{\mathfrak F} ( \Xi  , \Omega): = 
\left[{\mathfrak F} ( \Omega)\right]( \Xi  ) 
= - \int_M (s-\bar{s})f_\Xi \, d\mu
\end{equation}
where $\bar{s}$ denotes the average value of the scalar curvature, which 
can be computed by the topological formula
$$
\bar{s}= 4\pi m\frac{c_1\cdot \Omega^{m-1}}{\Omega^m}~.
$$
Of course,  the negative sign appearing 
in (\ref{foot}) is strictly a matter of convention, and is used  here primarily
to ensure consistency with \cite{ls}. Also note that the $\bar{s}$ term  in (\ref{foot})
could be 
dropped  if one required that the holonomy potential 
 $f_\Xi$  be normalized to have
  integral zero; however, we will find it useful to avoid  systematically   imposing 
  such  a  normalization. 
  
  Let ${\mathbf H}$ now denote the identity component
of the automorphism group of $(M,J)$, so that $\mathfrak h$ is 
its Lie algebra. Because the  assumption that $b_1(M)=0$ implies that 
 ${\mathbf H}$ is a linear algebraic  group \cite{fujikiaut},
we can define its unipotent radical ${\mathbf R}_{\mathbf u}$ to consist of 
the unipotent elements of its maximal solvable normal subgroup. 
If ${\mathbf G}\subset {\mathbf H}$ is a maximal compact subgroup, 
and if ${\mathbf G}_\CC\subset {\mathbf H}$ is its complexification, 
then ${\mathbf G}_\CC$  projects isomorphically onto  the quotient
group ${\mathbf H} /{\mathbf R}_{\mathbf u}$. The Chevalley decomposition \cite{chevalley} moreover expresses
  ${\mathbf H}$  as a semi-direct product
$${\mathbf H} = {\mathbf G}_\CC \ltimes {\mathbf R}_{\mathbf u}$$
 and we  have a corresponding split short exact sequence 
 $$
 0\to {\mathfrak r}_{\mathfrak u}\to {\mathfrak h} \to {\mathfrak g}_\CC\to 0
 $$
of Lie algebras.

 In their pioneering work on extremal K\"ahler vector fields \cite{fuma1}, Futaki and Mabuchi  
 next restricted the 
 Futaki invariant  ${\mathfrak F}$ to ${\mathfrak g}_\CC\subset {\mathfrak h}$.
 However, under mild hypotheses, this is not actually necessary: 
 
 \begin{prop}\label{sauce}
 Let $(M^{2m},J)$ be a compact complex $m$-manifold of K\"ahler type 
for which  $h^{1,0}=h^{2,0}=0$. Then the Futaki invariant
 ${\mathfrak F}(\Omega)\in {\mathfrak h}^*$ automatically 
 annihilates the Lie algebra ${\mathfrak r}_{\mathfrak u}$ of the unipotent radical,
 and so belongs to $ {\mathfrak g}_\CC^*$. Moreover, this element is automatically 
 {\em real},
 and so belongs to  $ {\mathfrak g}^*$.
 \end{prop}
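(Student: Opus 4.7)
My plan is to exploit the naturality of the Futaki invariant under the adjoint action of $\mathbf{H}$. First I would verify that for every $g\in \mathbf{H}$ and every $\Xi\in \mathfrak{h}$,
$$\mathfrak{F}(\mathrm{Ad}(g)\Xi,\Omega)=\mathfrak{F}(\Xi,\Omega);$$
this is immediate, since the connected group $\mathbf{H}$ acts trivially on $H^{2}(M,\RR)$ so that $\Omega$ is $\mathbf{H}$-invariant, and the integral in (\ref{foot}) transforms correctly under simultaneous pullback of the metric and the vector field by $g$. Given this, for any $\Xi \in \mathfrak{r}_{\mathfrak{u}}$ I would average over the compact group $\mathbf{G}$ to form
$$\Xi_\natural=\int_{\mathbf{G}}\mathrm{Ad}(\sigma)\,\Xi\,d\sigma \in \mathfrak{r}_{\mathfrak{u}},$$
with membership in $\mathfrak{r}_{\mathfrak{u}}$ holding because this is an Ad-invariant ideal, while $\mathfrak{F}(\Xi,\Omega)=\mathfrak{F}(\Xi_\natural,\Omega)$ by linearity.

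The vanishing on $\mathfrak{r}_{\mathfrak{u}}$ then reduces to the algebraic statement $\mathfrak{r}_{\mathfrak{u}}^{\mathbf{G}_{\CC}}=0$: no nonzero element of the unipotent radical centralizes the Levi complement $\mathbf{G}_{\CC}$. Such a $\Xi_\natural$ would be nilpotent and yield, via $\exp(\CC\,\Xi_\natural)$, a unipotent one-parameter subgroup commuting with $\mathbf{G}_{\CC}$; the assumptions $h^{1,0}=h^{2,0}=0$, combined with Fujiki's structure theorem for the linear algebraic group $\mathbf{H}$, are what rule out this pathology and force $\Xi_\natural=0$. For the reality claim, after averaging any K\"ahler representative of $\Omega$ over $\mathbf{G}$ I may assume the metric $g$ is $\mathbf{G}$-invariant, so that $s$ is real and $\mathbf{G}$-invariant; every $X\in\mathfrak{g}$ then generates a Killing field with a real Hamiltonian $u_X$, which (modulo a convention-dependent factor of $i$ arising from the embedding $\mathfrak{g}\hookrightarrow\mathfrak{h}$) serves as holomorphy potential for the corresponding element of $\mathfrak{h}$. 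Substituting into (\ref{foot}) gives $\mathfrak{F}(X,\Omega)=-\int_M(s-\bar{s})u_X\,d\mu\in\RR$, and $\CC$-linear extension to $\mathfrak{g}_{\CC}$ places $\mathfrak{F}(\Omega)$ in the real form $\mathfrak{g}^*\subset\mathfrak{g}_{\CC}^*$.

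The principal technical obstacle is the algebraic step — showing that the Levi complement $\mathbf{G}_{\CC}$ has no nontrivial centralizer inside the unipotent radical — as once this is settled the averaging argument and the reality computation are both routine consequences of working with a $\mathbf{G}$-invariant metric.
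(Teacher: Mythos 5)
Your averaging reduction is sound as far as it goes: the Futaki invariant is indeed $\mathrm{Ad}$-invariant, $\mathfrak{r}_{\mathfrak{u}}$ is an $\mathrm{Ad}$-invariant ideal, and so ${\mathfrak F}(\Xi,\Omega)={\mathfrak F}(\Xi_\natural,\Omega)$ with $\Xi_\natural\in\mathfrak{r}_{\mathfrak{u}}^{\mathbf G_\CC}$. But this reduction is essentially the same as the observation (already made in the paper, in the paragraph following the Proposition) that a character of $\mathfrak h$ kills $[\mathfrak h,\mathfrak h]$, hence kills the isotypic complement $[\mathfrak{g}_\CC,\mathfrak{r}_{\mathfrak{u}}]$ of $\mathfrak{r}_{\mathfrak{u}}^{\mathfrak{g}_\CC}$ inside $\mathfrak{r}_{\mathfrak{u}}$. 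The entire content of the Proposition is therefore concentrated in your asserted ``algebraic step'' $\mathfrak{r}_{\mathfrak{u}}^{\mathbf G_\CC}=0$, and this is where the proposal breaks down. That statement is not a consequence of Fujiki's theorem, which only tells you that $\mathbf H$ is linear algebraic and says nothing about how the Levi factor acts on the unipotent radical; and as a statement about abstract linear algebraic groups it is simply false (e.g.\ $\mathbf H=\CC^\times\times\CC_a$, or $\mathbf H=\CC_a$ with trivial Levi). You would therefore need a genuinely geometric argument showing that such groups cannot occur as $\Aut^0(M)$ under the hypotheses $h^{1,0}=h^{2,0}=0$, and no such argument is offered; indeed, connected automorphism groups with nontrivial $\mathbf G_\CC$-fixed vectors in $\mathfrak{r}_{\mathfrak{u}}$ (even purely unipotent $\Aut^0$) do occur for rational surfaces, so the claim cannot be rescued.

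The paper's proof attacks exactly this hard core by analytic and algebro-geometric means: it first establishes the vanishing on $\mathfrak{r}_{\mathfrak{u}}$ for \emph{integral} K\"ahler classes via Nakagawa's theorem (Proposition \ref{naka}, which rests on Tian's localization formula for Hodge metrics), after checking in Lemma \ref{initio} that the $\mathbf H$-action lifts to a power of the polarizing line bundle; it then passes to arbitrary K\"ahler classes using the density of $H^2(M,\QQ)\cap{\zap K}$ in ${\zap K}$ (which is where $h^{2,0}=0$ actually enters), the homogeneity ${\mathfrak F}(\Xi,\lambda\Omega)=\lambda^m{\mathfrak F}(\Xi,\Omega)$, and continuity in $\Omega$. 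None of this is replaceable by a formal averaging argument. Your treatment of the reality statement, by contrast, is correct and coincides with the paper's.
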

 As we show in  Appendix \ref{conceptual}, this is actually a  straightforward consequence
 of   a  theorem of Nakagawa \cite{nakagawa1}. 
 
 Because the Futaki invariant is invariant under biholomorphisms, 
 it is unchanged by the action of ${\mathbf H}$
 on $\mathfrak h$. It follows that 
 ${\mathfrak F}(\Omega )$ must vanish when restricted
to the derived subalgebra $[{\mathfrak h}, {\mathfrak h}]$.
Thus,  ${\mathfrak F}(\Omega ): {\mathfrak h}\to \CC$ is actually 
a  {Lie-algebra  character}. 
In particular,  ${\mathfrak F}(\Omega )$ annihilates the derived subalgebra  
$[{\mathfrak g}, {\mathfrak g}]$ 
of the maximal compact. Since the compactness of  ${\mathbf G}$ implies that it
is  a reductive
Lie group, ${\mathfrak g}= [{\mathfrak g}, {\mathfrak g}] \oplus {\mathfrak z}$,
where $ {\mathfrak z}$ is the center of ${\mathfrak g}$. We thus conclude that 
 ${\mathfrak F}(\Omega )\in {\mathfrak z}^*$ for any K\"ahler class $\Omega$
 whenever $M$ is as in Proposition \ref{reduce}. Since 
$\mathfrak z$ is contained in the Lie algebra of any  maximal torus 
 ${\mathbf T}\subset {\mathbf G}$, we thus deduce the following important 
fact: 

\begin{prop}
\label{centerville}
Let $(M^{2m},J)$ be a compact complex $m$-manifold of K\"ahler type 
for which  $h^{1,0}=h^{2,0}=0$. 
Let ${\mathbf T}$ be a maximal torus in $\Aut (M,J)$, and 
let $\mathfrak t$ be the Lie algebra of ${\mathbf T}$. Then, for
any K\"ahler class $\Omega$ on $M$,  the Futaki invariant 
${\mathfrak F}(\Omega )$ naturally belongs
to ${\mathfrak t}^*$. In particular, ${\mathfrak F}(\Omega )$ is completely 
determined by its restriction to $\mathfrak t$. 
\end{prop}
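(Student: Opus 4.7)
The plan is to chain together three facts: Proposition~\ref{sauce}, the biholomorphism invariance of $\mathfrak{F}(\Omega)$, and the reductive structure of a compact Lie group. The content of the argument is essentially linear algebra along the chain of inclusions $\mathfrak{z}\subset\mathfrak{t}\subset\mathfrak{g}\subset\mathfrak{h}$, so no serious analytic input is required beyond what has already been invoked.

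The first step is to apply Proposition~\ref{sauce}, whose hypotheses ($h^{1,0}=h^{2,0}=0$) coincide with those of the current proposition: this realizes $\mathfrak{F}(\Omega)$ as a real linear functional on the Lie algebra $\mathfrak{g}$ of a maximal compact subgroup $\mathbf{G}\subset\mathbf{H}$. The second step is to verify that $\mathfrak{F}(\Omega)$ is in fact a Lie-algebra character on $\mathfrak{h}$; because the Futaki invariant is unchanged under pullback by any element of $\mathbf{H}$, differentiating the action of a one-parameter subgroup $\exp(tX)\subset\mathbf{H}$ at the identity yields $\mathfrak{F}(\Omega)\bigl([X,Y]\bigr)=0$ for every $X,Y\in\mathfrak{h}$, so in particular $\mathfrak{F}(\Omega)$ annihilates $[\mathfrak{g},\mathfrak{g}]$.

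To finish, I would use the compactness of $\mathbf{G}$ to split $\mathfrak{g}=[\mathfrak{g},\mathfrak{g}]\oplus\mathfrak{z}$, where $\mathfrak{z}$ is the center of $\mathfrak{g}$; the functional $\mathfrak{F}(\Omega)|_\mathfrak{g}$ is then completely determined by its restriction to $\mathfrak{z}$. Since $\mathfrak{z}\subset\mathfrak{t}$ for any maximal torus $\mathbf{T}\subset\mathbf{G}$, and since any maximal torus of $\Aut(M,J)$ may be conjugated inside some maximal compact $\mathbf{G}\subset\mathbf{H}$, the restriction of $\mathfrak{F}(\Omega)$ to $\mathfrak{t}$ both makes sense and encodes the entire invariant: writing an arbitrary $X\in\mathfrak{t}$ as $X=X_s+X_z$ with $X_s\in [\mathfrak{g},\mathfrak{g}]\cap\mathfrak{t}$ and $X_z\in\mathfrak{z}$, one has $\mathfrak{F}(\Omega)(X)=\mathfrak{F}(\Omega)(X_z)$, and $X_z$ already lies in $\mathfrak{t}$.

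The only point where mild care is warranted is the final conjugation step — a standard but easy-to-overlook fact about real linear algebraic groups, to the effect that every compact torus lies in some maximal compact subgroup and any two such maximal compacts are conjugate. Beyond this, I foresee no genuine obstacle; the proposition is really a clean consequence of Proposition~\ref{sauce} together with the algebraic structure already encoded in the Chevalley decomposition.
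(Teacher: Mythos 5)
Your argument is correct and follows the paper's own route essentially verbatim: Proposition~\ref{sauce} places ${\mathfrak F}(\Omega)$ in ${\mathfrak g}^*$, biholomorphism invariance makes it a Lie-algebra character killing $[{\mathfrak g},{\mathfrak g}]$, the reductive splitting ${\mathfrak g}=[{\mathfrak g},{\mathfrak g}]\oplus{\mathfrak z}$ puts it in ${\mathfrak z}^*$, and ${\mathfrak z}\subset{\mathfrak t}$ finishes the proof. The only addition is your explicit remark that a maximal torus of $\Aut(M,J)$ sits inside some maximal compact subgroup, which the paper leaves implicit; this is a harmless and correct refinement rather than a different approach.
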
 

Now, for a fixed ${\mathbf G}$-invariant metric $g$, we have already noticed that 
every Killing field $\xi$ on $(M,g)$ is represented by a unique 
Hamiltonian  $f_\xi$ with $\int_M f_\xi \, d\mu =0$, and that the Lie bracket
on $\mathfrak g$ is thereby transformed into the Poisson bracket on 
$(M, \omega)$: 
$$
f_{[\xi , \eta ]} = \{ f_\xi , f_\eta \}= -\omega^{-1}(df_\xi , df_\eta )~.
$$
Following Futaki and Mabuchi \cite{fuma1}, we may therefore introduce a bilinear form 
${\mathbb B}$ on the real Lie algebra ${\mathfrak g}$ by restricting the 
$L^2$ norm of $(M,g)$ to the space of these Hamiltonians:
$$
{\mathbb B}(\xi , \eta ) = \int_M f_\xi f_\eta \, d\mu_g = \frac{1}{m!}\int_M f_\xi f_\eta \, \omega^m~.
$$
 Since a straightforward version of Moser stability shows that 
 the K\"ahler forms of any two $\mathbf G$-invariant metrics in a fixed K\"ahler class are 
 $\mathbf G$-equivariantly symplectomorphic, this inner product only depends
 on $\Omega$ and  the maximal compact ${\mathbf G}< {\mathbf H}$, not on the representative
  metric
 $g$. Moreover, since any two maximal compacts are conjugate in ${\mathbf H}$,
 one can show \cite{fuma1} that the corresponding complex-bilinear form on 
 ${\mathfrak g}_\CC= {\mathfrak h}/{\mathfrak r}_{\mathfrak u}$ is actually
 independent  of the choice of maximal compact ${\mathbf G}$. 
 
 Since ${\mathbb B}$ is positive-definite, and so defines an isomorphism 
 ${\mathfrak g} \to {\mathfrak g}^*$, it also has a well-defined inverse 
 which gives a positive-definite bilinear form 
 $${\mathbb B}^{-1} : {\mathfrak g}^*\times {\mathfrak g}^*\to \RR$$
on the Lie coalgebra of our maximal compact. 
On the other hand, assuming that $(M,J)$ is as in Proposition \ref{reduce}, 
we have already seen that ${\mathfrak F}(\Omega )\in {\mathfrak g}^*$
for any K\"ahler class $\Omega$ on $M$. 
Thus, the number 
\begin{equation}
\label{norm}
\| {\mathfrak F}(\Omega )\|^2 : = {\mathbb B}^{-1}({\mathfrak F}(\Omega ) , {\mathfrak F}(\Omega ))
\end{equation}
is independent of choices, and so is an invariant of $(M,J, \Omega )$.

To see why this number has an important differential-geometric significance,
let us first suppose that $g$ is a ${\mathbf G}$-invariant K\"ahler metric with 
K\"ahler class $\Omega$, 
and let $\proj$ be  orthogonal projection in the real Hilbert space
$L^2(M,g)$ to the subspace of normalized Hamiltonians representing the Lie algebra
${\mathfrak g}$ of Killing fields on $(M,g)$. Restricting 
equation  (\ref{foot}) to ${\mathfrak g} \subset {\mathfrak h}$, one 
observes  that ${\mathfrak F}(\Omega) : {\mathfrak g} \to \RR$
is exactly given by the ${\mathbb B}$-inner-product with the Killing field
whose Hamiltonian is $- \proj(s-\bar{s})$. We thus immediately
have
$$
\int_M \left[{\proj}(s-\bar{s})\right]^2d\mu_g =  \| {\mathfrak F}(\Omega )\|^2 
$$
and, since the projection $\proj$ is norm-decreasing, it follows  that 
\begin{equation}
\label{minoree} 
\int_M (s-\bar{s})^2 d\mu_g \geq \| {\mathfrak F}(\Omega )\|^2
\end{equation}
for any ${\mathbf G}$-invariant K\"ahler metric with 
K\"ahler class $\Omega$. It is a remarkable  fact,
proved by Xiuxiong Chen \cite{xxel},  that  inequality (\ref{minoree}) 
actually holds even if $g$ is {\em not} assumed to be ${\mathbf G}$-invariant. 
 Moreover, equality holds  in (\ref{minoree}) if and only if
  $\nabla^{1,0}s$ is a holomorphic vector field, which is precisely the condition 
\cite{calabix,calabix2}
for $g$ to be an  extremal K\"ahler metric.

 The bilinear form ${\mathbb B}$ on $\mathfrak g$ is bi-invariant. In particular, 
 the center $\mathfrak z$ of $\mathfrak g$ is ${\mathbb B}$-orthogonal to 
 the semi-simple
 factor $[{\mathfrak g}, {\mathfrak g}]$ of ${\mathfrak g}$. Thus, 
 a computation of $\|\mathfrak F (\Omega)\|^2$ does not require 
 a complete knowledge of the bilinear form $\mathbb B$; only a knowledge of
 its  restriction  to 
 $\mathfrak z$ is required. This observation allows us to prove  the following:
 
 \begin{cor} \label{pump} 
 Let $(M,J)$ be as in Proposition \ref{centerville},  let $\mathbf T$ be a 
 maximal torus in the complex automorphism group of $(M,J)$,
 and let $\mathfrak t$ denote the Lie algebra of 
$\mathbf T$.
If  $g$ is 
  any ${\mathbf T}$-invariant K\"ahler metric with K\"ahler class $\Omega$, and if 
$${\mathbb B}_{\mathbf T}: {\mathfrak t}\times {\mathfrak t}\to \RR$$ is 
the  $g$-induced $L^2$-norm restricted to normalized Hamiltonians, then
$$\| {\mathfrak F}(\Omega )\|^2 = {\mathbb B}_{\mathbf T}^{-1}\Big(   {\mathfrak F}(\Omega ) \, ,  \, {\mathfrak F}(\Omega )\Big)$$
where ${\mathbb B}_{\mathbf T}^{-1}$ denotes the inner product on ${\mathfrak t}^*$ induced by 
${\mathbb B}_{\mathbf T}$.
 \end{cor}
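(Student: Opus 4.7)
The preceding discussion has already done most of the work: it is noted explicitly that $\mathbb{B}$ is bi-invariant, that the reductive splitting $\mathfrak{g}=\mathfrak{z}\oplus[\mathfrak{g},\mathfrak{g}]$ is therefore $\mathbb{B}$-orthogonal, and that $\mathfrak{F}(\Omega)\in\mathfrak{z}^*$. Block-diagonality of $\mathbb{B}^{-1}$ on $\mathfrak{g}^*$ then immediately gives
$$
\|\mathfrak{F}(\Omega)\|^2 \;=\; \bigl(\mathbb{B}|_{\mathfrak{z}}\bigr)^{-1}\!\bigl(\mathfrak{F}(\Omega),\mathfrak{F}(\Omega)\bigr).
$$
The plan is to run the parallel reduction inside $\mathfrak{t}$ and then match the two resulting restrictions to $\mathfrak{z}$.

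For the first half, since $\mathfrak{z}\subset\mathfrak{t}$, writing $X\in\mathfrak{t}$ as $Z+Y$ with $Z\in\mathfrak{z}$, $Y\in[\mathfrak{g},\mathfrak{g}]$ forces $Y=X-Z\in\mathfrak{t}$, so one gets a decomposition $\mathfrak{t}=\mathfrak{z}\oplus(\mathfrak{t}\cap[\mathfrak{g},\mathfrak{g}])$, which inherits $\mathbb{B}$-orthogonality from bi-invariance. Granting for a moment the identification $\mathbb{B}_{\mathbf{T}}=\mathbb{B}|_{\mathfrak{t}}$, the inverse $\mathbb{B}_{\mathbf{T}}^{-1}$ on $\mathfrak{t}^*$ is block-diagonal in the dual decomposition, and the fact that $\mathfrak{F}(\Omega)\in\mathfrak{z}^*$ annihilates $\mathfrak{t}\cap[\mathfrak{g},\mathfrak{g}]$ yields
$$
\mathbb{B}_{\mathbf{T}}^{-1}\!\bigl(\mathfrak{F}(\Omega),\mathfrak{F}(\Omega)\bigr)\;=\;\bigl(\mathbb{B}|_{\mathfrak{z}}\bigr)^{-1}\!\bigl(\mathfrak{F}(\Omega),\mathfrak{F}(\Omega)\bigr),
$$
matching the first display.

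The only remaining — and main — step is therefore to identify $\mathbb{B}_{\mathbf{T}}$, built from the given merely $\mathbf{T}$-invariant metric $g$, with the restriction of $\mathbb{B}$, built from a genuinely $\mathbf{G}$-invariant metric in $\Omega$, to $\mathfrak{t}\times\mathfrak{t}$. I would choose a $\mathbf{G}$-invariant representative $\tilde g\in\Omega$ by averaging a K\"ahler potential over $\mathbf{G}$, and then rerun the $\mathbf{T}$-equivariant version of the Moser argument already invoked to prove metric-independence of $\mathbb{B}$. Concretely, a $\mathbf{T}$-equivariant diffeomorphism $\phi$ with $\phi^{*}\tilde\omega=\omega$ sends each $\xi\in\mathfrak{t}$ to itself, so $\phi^{*}\tilde f_\xi$ is a Hamiltonian for $\xi$ on $(M,\omega)$; its integral against $\omega^m/m!=\phi^*(\tilde\omega^m/m!)$ equals that of $\tilde f_\xi$ against $\tilde\omega^m/m!$, namely zero, so $\phi^{*}\tilde f_\xi=f_\xi$, and pulling back the product integral gives $\mathbb{B}_{\mathbf{T}}(\xi,\eta)=\mathbb{B}(\xi,\eta)$.

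The main obstacle is this last metric-independence step, which is the sole place where the hypothesis of merely $\mathbf{T}$-invariance (rather than $\mathbf{G}$-invariance) of $g$ has to be handled; everything else is routine linear algebra for bi-invariant forms on reductive Lie algebras. Combining the two displays above then yields the corollary.
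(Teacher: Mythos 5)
Your argument is correct and takes essentially the same route as the paper's proof: both reduce the computation of $\| {\mathfrak F}(\Omega )\|^2$ to the restriction of ${\mathbb B}$ to the center ${\mathfrak z}\subset {\mathfrak t}$ via the bi-invariant orthogonal splitting, and both handle a merely ${\mathbf T}$-invariant metric by joining it to a ${\mathbf G}$-invariant representative through ${\mathbf T}$-invariant K\"ahler forms and invoking equivariant Moser stability, so that ${\mathbb B}_{\mathbf T}$ depends only on the symplectic form and the normalized Hamiltonians. You merely spell out the block-diagonal linear algebra and the pullback of normalized Hamiltonians in more detail than the paper does.
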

 \begin{proof}
 Let ${\mathbf G}$ be a maximal compact subgroup of ${\mathbf H}$ containing $\mathbf T$. 
 Then, by  Proposition \ref{centerville}, 
 the  assertion  certainly holds for any ${\mathbf G}$-invariant K\"ahler metric $\tilde{g}$ in $\Omega$.
 However, by averaging, any ${\mathbf T}$-invariant K\"ahler metric with K\"ahler class $\Omega$
 can be joined to $\tilde{g}$ by a path of such metrics, and is therefore ${\mathbf T}$-equivariantly
 symplectomorphic to $\tilde{g}$ by Moser stability. The claim therefore follows, since 
  ${\mathfrak F} (\Omega)\in {\mathfrak t}^*$ is completely
 determined by $(M,J,\Omega)$, while 
 ${\mathbb B}_{\mathbf T}$ is completely determined by the symplectic form 
 and normalized  Hamiltonians representing elements of $\mathfrak t$. 
 \end{proof}

\section{Toric Manifolds} 

We now specialize our discussion to the toric case. For  clarity, our presentation will be 
  self-contained, and will 
  include  idiosyncratic proofs of various  standard facts about  toric geometry. 
 For  more orthodox  expositions of some of these  fundamentals, 
 the reader might do well  to consult  
\cite{fultor} and \cite{guiltor}.

We  define  a {\em toric manifold} to be 
  a  (connected) compact complex $m$-manifold  $(M^{2m}, J)$ of K\"ahler type
which has  non-zero Euler characteristic and which is 
  equipped a group of automorphisms generated by  $m$ commuting, periodic,  
  $J$-preserving real vector fields 
  which are linearly
  independent in the space of vector fields on $M$. Thus,   the 
  relevant group  of automorphisms $\mathbf T$ is required to be the image of 
  the $m$-torus under 
  some Lie group homomorphism 
  $T^m\to \Aut (M, J)$ which induces an injection of Lie algebras.
  Notice that our definition implies that 
  there must be a fixed point $p\in M$ of  this $T^m$-action. Indeed, the
  fixed point set of any circle action on a smooth compact manifold is  \cite{kobfixed} a
  disjoint union of smooth compact manifolds with total Euler characteristic
  equal to the  Euler characteristic of the ambient space;  by induction on the number of   circle factors, it follows  that the 
  fixed-point set of any torus action on $M$ therefore has total Euler characteristic 
  $\chi (M)\neq 0$,
  and so, in particular, cannot be empty. 
  
  In light of this, let $p\in M$ be a fixed point of the given $T^m$-action on a toric manifold
   $(M^{2m},J)$, and, by averaging, also choose
  a K\"ahler metric  $g$ on $M$ which is  $T^m$-invariant.  Then $T^m$ acts on $T_pM\cong \CC^m$ in a manner preserving both $g$ and $J$, 
 giving us  a unitary representation $T^m\to {\mathbf U}(m)$. Since 
 the action of $T^m$ on $T_pM$
 completely determines the action on $M$ via the exponential map $T_pM\to M$ of $g$, and 
 since, by hypothesis,  
 the Lie algebra of $T^m$ injects into the vector fields on $M$, it follows that 
 the above unitary representation gives rise to a faithful representation of 
 ${\mathbf T}<\Aut (M,J)$. However,  ${\mathbf U}(m)$ has rank $m$, so 
 the image of  $T^m\to {\mathbf U}(m)$  must be  a maximal torus
 in ${\mathbf U}(m)$; thus, after a change of basis of $\CC^m$, 
${\mathbf T}$  may be identified with the standard
 maximal torus ${\mathbf U}(1)\times \cdots \times {\mathbf U}(1)
 \subset  {\mathbf U}(m)$  consisting of  diagonal matrices.
 In particular, $\mathbf T< \Aut (M,J)$ is intrinsically an $m$-torus, and 
 has many free orbits.
 Since the 
 origin in $\CC^m$ is the only fixed point of  the diagonal torus in ${\mathbf U}(m)$,  
 it also follows that $p$ must be an isolated fixed point of ${\mathbf T}$. 
But since the same argument applies equally well to any other fixed point, this shows that 
the fixed-point set $M_\mathbf T$ of $\mathbf T$ is discrete, and therefore finite.
In particular, $\chi (M)$  must equal the 
  cardinality of  $M_\mathbf T$, so 
  the Euler characteristic of $M$ is necessarily positive.

 The above arguments  in particular show  that  the toric condition can be  
 reformulated as follows: 
  a toric $m$-manifold is a compact complex $m$-manifold $(M,J)$ of K\"ahler type, 
  together  with an $m$-torus ${\mathbf T}\subset \Aut (M,J)$ that 
  has both a free orbit ${\mathcal Q}$ and a fixed point $p$.  To check the equivalence, note
  that 
  this reformulation implies that the Euler characteristic $\chi (M)$ is positive, because
  the  fixed-point set $M_{\mathbf T}$
  is necessarily   finite,  and  by hypothesis is also non-empty.

Now let $(M, J , {\mathbf T})$ be   a toric $m$-manifold, and let 
${\mathfrak j}: {\mathcal Q}\hookrightarrow M$ be the inclusion of a
free $\mathbf T$-orbit. Since $\mathbf T$ also has a fixed-point $p$, and since 
  any two $\mathbf T$-orbits are homotopic, it follows that 
${\mathfrak j}$ is  
homotopic to a constant map. Consequently,  the induced homomorphism  
${\mathfrak j}^\ast :
H^k (M) \to H^k({\mathcal Q})$ must be the zero map  in all dimensions
 $k > 0$.  However, 
the restriction of the   K\"ahler form $\omega = g (J\cdot , \cdot )$
to ${\mathcal Q} \approx {\mathbf T}$  is  an  invariant $2$-form on ${\mathbf T}\approx T^m$. 
Since every deRham class on $T^m$ contains a unique 
invariant form, and since ${\mathfrak j}^*[\omega ] = 0 \in H^2 (T^m, \RR)$, 
it follows that ${\mathfrak j}^*\omega$ must vanish identically. Thus 
${\mathcal Q}$ is a Lagrangian submanifold, which is to say that  $T{\mathcal Q}$ is everywhere orthogonal to 
$J(T{\mathcal Q})$. In particular, if $\xi_1 , \ldots , \xi_m$ are the generators 
of the ${\mathbf T}$-action, the corresponding holomorphic   vector fields 
  $\Xi_j = -(J\xi_j +i\xi_j)/2$
 span $T^{1,0}M$ in a neighborhood of ${\mathcal Q}$. 
Integrating the flows of the commuting vector fields $\xi_j$ and $J\xi_j$,
we thus obtain 
a holomorphic action of the complexified torus $(\CC^\times)^m$  which has
 both a fixed point and an open orbit $\mathcal U$.

 In particular, $(M,J)$ carries 
 $m$ holomorphic vector fields $\Xi_1, \ldots , \Xi_m$ which vanish at $p$, 
 but which nonetheless span $T^{1,0}(M)$ at a generic point. 
 It follows that $M$ cannot carry a non-trivial holomorphic $k$-form
 $\alpha \in H^{k,0}(M)$ for any $k > 0$, since,  for any choice 
 of $j_1, \ldots , j_k$,  the ``component'' 
 functions $\alpha (\Xi_{j_1}, \ldots , \Xi_{j_k})$
 would be holomorphic, and hence  constant,  and yet would have to vanish at the fixed point $p$. In particular, we may invoke   Kodaira's observation  \cite{kodrr} that any 
 K\"ahler manifold with  $H^{2,0}=0$ admits Hodge metrics, and so is projective. This
 gives us the following result:

 \begin{lem}\label{van} 
 Any toric manifold $M$ is projective algebraic, and satisfies \linebreak 
  $H^{k,0}(M)=0$ for all $k> 0$. 
 \end{lem}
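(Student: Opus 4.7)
The plan is to leverage the $m$ commuting holomorphic vector fields $\Xi_1,\ldots,\Xi_m$ constructed in the paragraph just preceding the lemma. Two features of these fields will do all the work: each $\Xi_j$ vanishes at the fixed point $p$ (where the torus acts trivially), while together the $\Xi_j$ span $T^{1,0}M$ at every point of the open orbit $\mathcal{U}$ of the complexified torus $(\CC^\times)^m$.

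First I would establish the vanishing $H^{k,0}(M)=0$ for every $k>0$. Given a holomorphic $k$-form $\alpha$, the idea is to pair it with $k$-tuples of the $\Xi_j$'s: each scalar function $\alpha(\Xi_{j_1},\ldots,\Xi_{j_k})$ is holomorphic on the compact complex manifold $M$, hence constant, and this constant must be zero because each $\Xi_{j_i}$ vanishes at $p$. Since the $\Xi_j$ span $T^{1,0}M$ on the open set $\mathcal{U}$, this forces $\alpha$ to vanish on $\mathcal{U}$, and then on all of $M$ by the identity principle for holomorphic sections.

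Second, to conclude that $M$ is projective, I would invoke Kodaira's embedding theorem in the form already cited: any compact K\"ahler manifold with $H^{2,0}=0$ carries a Hodge metric. Here the Hodge decomposition together with $H^{2,0}(M)=0$ yields $H^2(M,\RR)=H^{1,1}(M,\RR)$, so that $H^2(M,\QQ)$ is dense in $H^{1,1}(M,\RR)$; since the K\"ahler cone is open, one can perturb the given K\"ahler class to a rational, and after clearing denominators to an integral, K\"ahler class, which is precisely a Hodge metric.

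I expect no serious obstacle; once the $\Xi_j$ are in hand the argument is almost formal. The one point deserving care is that the $\Xi_j$ really are \emph{globally defined} holomorphic vector fields on $M$, not merely local objects near the free orbit $\mathcal{Q}$. This has been arranged in the preceding paragraph by observing that the flows of the real Killing fields $\xi_j$ together with those of their $J$-transforms $J\xi_j$ integrate to a global holomorphic action of $(\CC^\times)^m$ on $M$, so the $\Xi_j=-(J\xi_j+i\xi_j)/2$ are the infinitesimal generators of this global action and in particular extend across the complement of $\mathcal{U}$.
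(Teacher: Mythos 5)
Your argument is correct and is essentially identical to the paper's: the paper likewise pairs a putative holomorphic $k$-form with the globally defined fields $\Xi_{j_1},\ldots,\Xi_{j_k}$, observes that the resulting holomorphic functions are constants forced to vanish at the fixed point $p$, and then invokes Kodaira's theorem that a compact K\"ahler manifold with $H^{2,0}=0$ admits a Hodge metric and is therefore projective. The only difference is that you spell out the rational-density argument behind Kodaira's observation, which the paper defers to its appendix.
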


In particular, the identity component 
 ${\mathbf H} = \Aut^0(M,J)$
  of the automorphism group of our toric $m$-manifold 
  is linear algebraic. Let ${\mathbf T} < {\mathbf H}$ be the $m$-torus associated
  with the toric structure of $(M,J)$. 
 Using the Chevalley decomposition, we 
  can then choose  a  
 maximal compact subgroup 
 $\mathbf G < {\mathbf H}$ containing $\mathbf T$. Also choose   a $\mathbf G$-invariant 
 K\"ahler metric $g$ on $M$  and 
    a  fixed point $p$  of   ${\mathbf T}$.
  We will now study the centralizer $Z({\mathbf T})<\mathbf G$, consisting of elements 
   of $\mathbf G$ that
   commute with all elements of ${\mathbf T}$. Observe that 
   $${\zap a}\in Z({\mathbf T}), {\zap b} \in {\mathbf T} ~\Longrightarrow 
   {\zap b}({\zap a}(p)) = {\zap a}({\zap b}(p))= {\zap a}(p),$$
   so that $Z({\mathbf T})$ acts by permutation  on the finite set $M_{\mathbf T}$ of 
   fixed points. 
   In particular, the identity component $Z^0({\mathbf T})$ of $Z({\mathbf T})$
   must send $p$ to itself. Once more invoking the exponential map of $g$,
   we thus obtain a faithful unitary representation of  $Z^0({\mathbf T})$
   by considering its induced action on $T_pM\cong \CC^m$. However, the 
   image of $Z^0({\mathbf T})$ in ${\mathbf U}(m)$ must then be a  subgroup of
   the centralizer of the diagonal torus ${\mathbf U}(1)\times \cdots \times {\mathbf U}(1)$
   in  ${\mathbf U}(m)$. But since the latter  centralizer is just the diagonal torus itself,
   we conclude  that 
 $Z^0({\mathbf T})= {\mathbf T}$. It follows that  ${\mathbf T}$ is a maximal torus in 
 $\mathbf G$,   and hence  also  in $\mathbf H = {\mathbf G}_\CC \ltimes {\mathbf R}_{\mathbf u}$:

\begin{lem} \label{max} 
Let $(M^{2m}, J)$ be a toric manifold, and let    ${\mathbf T}< \Aut (M,J)$
be the associated $m$-torus. 
 Then ${\mathbf T}$ is a {maximal} torus in $\Aut (M,J)$. 
\end{lem}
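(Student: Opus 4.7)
The goal is to show that the $m$-torus $\mathbf T$ is already as large as any torus in $\mathbf H := \Aut^0(M,J)$ can be. The plan is to verify that the identity component of the centralizer $Z^0_{\mathbf H}(\mathbf T)$ coincides with $\mathbf T$ itself, since a torus $\mathbf T$ in a (real or complex) Lie group is maximal exactly when it is its own centralizer up to identity components. By Lemma \ref{van}, $\mathbf H$ is a linear algebraic group, so the Chevalley decomposition applies and we may choose a maximal compact subgroup $\mathbf G < \mathbf H$ containing $\mathbf T$; it will suffice to prove maximality in $\mathbf G$, since the resulting maximal torus of $\mathbf G_\CC \cong \mathbf H/\mathbf R_{\mathbf u}$ lifts to a maximal torus of $\mathbf H$ in the Chevalley decomposition $\mathbf H = \mathbf G_\CC \ltimes \mathbf R_{\mathbf u}$.

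The main step is to control $Z^0(\mathbf T) \subset \mathbf G$ by localizing at a fixed point. From the preceding discussion, the fixed-point set $M_{\mathbf T}$ is a finite, non-empty set, and any element of $Z(\mathbf T)$ permutes it. Consequently the connected subgroup $Z^0(\mathbf T)$ must fix each point of $M_{\mathbf T}$ individually; in particular it fixes a chosen $p \in M_{\mathbf T}$. Averaging a K\"ahler metric to obtain a $\mathbf G$-invariant $g$ and using its exponential map $T_pM \to M$, I can linearize the $Z^0(\mathbf T)$-action at $p$ to obtain a representation $Z^0(\mathbf T) \to \mathbf U(T_pM) \cong \mathbf U(m)$. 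The representation is faithful because the exponential map is a local diffeomorphism at $0$ and an isometry of the compact group $Z^0(\mathbf T)$ on $(M,g)$ is determined by its $1$-jet at any point.

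Now the earlier analysis of the $\mathbf T$-action at $p$ already identified the image of $\mathbf T$ in $\mathbf U(m)$ with the standard diagonal maximal torus. The image of $Z^0(\mathbf T)$ must centralize this diagonal torus inside $\mathbf U(m)$, and the centralizer of the diagonal maximal torus is just the diagonal torus itself. Thus the image of $Z^0(\mathbf T)$ equals the image of $\mathbf T$, and by faithfulness $Z^0(\mathbf T) = \mathbf T$. This makes $\mathbf T$ a maximal torus in the compact group $\mathbf G$, hence, via Chevalley, in $\mathbf H$.

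The only delicate point is justifying that the linearized representation of $Z^0(\mathbf T)$ at $p$ is honestly faithful; everything else is essentially bookkeeping of arguments already assembled in the paragraphs preceding the lemma. I would therefore spend a sentence explaining why no nontrivial element of the compact connected group $Z^0(\mathbf T)$ can act trivially on $T_pM$, invoking the fact that a Riemannian isometry of a connected manifold fixing a point and inducing the identity on the tangent space must be the identity map.
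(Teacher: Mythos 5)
Your proposal is correct and follows essentially the same route as the paper: both arguments pass to a maximal compact $\mathbf G \supset \mathbf T$, observe that $Z(\mathbf T)$ permutes the finite fixed-point set so that $Z^0(\mathbf T)$ fixes a point $p$, linearize at $p$ via the exponential map of a $\mathbf G$-invariant metric to embed $Z^0(\mathbf T)$ into the centralizer of the diagonal torus in $\mathbf U(m)$, and conclude $Z^0(\mathbf T)=\mathbf T$, whence maximality in $\mathbf G$ and then in $\mathbf H = \mathbf G_\CC \ltimes \mathbf R_{\mathbf u}$. Your extra sentence justifying faithfulness of the isotropy representation (an isometry of a connected manifold fixing a point and acting trivially on the tangent space there is the identity) makes explicit a step the paper leaves implicit, but it is the same argument.
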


Combining this result with Lemma \ref{van} and Proposition \ref{centerville}, we 
can thus  generalize \cite[Theorem 1.9]{nakagawa2} to irrational 
K\"ahler classes: 
 
 \begin{prop}
\label{toroidal}
Let $(M^{2m},J)$ be a toric manifold,  let ${\mathbf T}$
be the given  $m$-torus in its automorphism group, and 
let $\mathfrak t$ be the Lie algebra of ${\mathbf T}$. Then, for
any K\"ahler class $\Omega$ on $M$,  the Futaki invariant 
${\mathfrak F}(\Omega )$ naturally belongs
to ${\mathfrak t}^*$. In particular, ${\mathfrak F}(\Omega )$ is completely 
determined by its restriction to $\mathfrak t$. 
\end{prop}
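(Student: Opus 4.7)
The plan is to observe that this proposition is essentially an immediate corollary of the three results established just above it in the text: Lemma \ref{van}, Lemma \ref{max}, and Proposition \ref{centerville}. I would simply verify that the hypotheses of Proposition \ref{centerville} are met for our specific $\mathbf T$.

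First, I would invoke Lemma \ref{van}: since $(M,J)$ is a toric manifold, it satisfies $H^{k,0}(M)=0$ for all $k>0$, and in particular $h^{1,0}=h^{2,0}=0$. This verifies the cohomological hypothesis in Proposition \ref{centerville}. It also ensures that $b_1(M)=0$ (via Hodge decomposition), so the whole framework leading up to Proposition \ref{centerville}, including the Chevalley decomposition of $\mathbf H = \Aut^0(M,J)$, is applicable.

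Next, I would apply Lemma \ref{max}, which states that the $m$-torus $\mathbf T$ associated with the toric structure is in fact a \emph{maximal} torus in $\Aut(M,J)$. This is precisely the hypothesis on $\mathbf T$ required by Proposition \ref{centerville}.

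Finally, I would simply invoke Proposition \ref{centerville} with this maximal torus $\mathbf T$: for any Kähler class $\Omega$ on $M$, the Futaki invariant $\mathfrak F(\Omega)$ belongs to $\mathfrak t^*$ and is thus determined by its restriction to $\mathfrak t$. There is no real obstacle here; the main content has already been done in establishing Lemmas \ref{van} and \ref{max}, and the novelty of the statement relative to Nakagawa's Theorem 1.9 (which presumably uses integrality/rationality of $\Omega$ via algebraic-geometric methods) is that Proposition \ref{centerville} makes no rationality assumption on the Kähler class, relying only on the Hodge-theoretic vanishing $h^{1,0}=h^{2,0}=0$ supplied by Lemma \ref{van}.
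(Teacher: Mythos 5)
Your proposal is correct and is exactly the paper's own derivation: the text introduces Proposition \ref{toroidal} with the words ``Combining this result [Lemma \ref{max}] with Lemma \ref{van} and Proposition \ref{centerville}, we can thus generalize \ldots'', which is precisely the chain of reductions you describe. Your remark about the point of the statement being the removal of any rationality assumption on $\Omega$ also matches the paper's framing.
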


However, we will not simply need to know
 where ${\mathfrak F}(\Omega )$  lives;  our goal will require us   to calculate 
 its norm with respect to the relevant bilinear form. 
Fortunately,  Lemma \ref{max} and   Corollary \ref{pump} together 
imply   the following result: 
 
 \begin{prop} \label{bump}
Let $(M^{2m},J)$ be a toric manifold,  let ${\mathbf T}$
be the given  $m$-torus in its automorphism group, and 
let $\mathfrak t$ be the Lie algebra of ${\mathbf T}$. 
If  $g$ is 
  any ${\mathbf T}$-invariant K\"ahler metric with K\"ahler class $\Omega$, and if 
$${\mathbb B}_{\mathbf T}: {\mathfrak t}\times {\mathfrak t}\to \RR$$ is 
the  $g$-induced $L^2$-norm restricted to normalized Hamiltonians, then
$$\| {\mathfrak F}(\Omega )\|^2 = {\mathbb B}_{\mathbf T}^{-1}\Big(   {\mathfrak F}(\Omega ) \, ,  \, {\mathfrak F}(\Omega )\Big)$$
where ${\mathbb B}_{\mathbf T}^{-1}$ denotes the inner product on ${\mathfrak t}^*$ induced by 
${\mathbb B}_{\mathbf T}$.
 \end{prop}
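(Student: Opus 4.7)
The plan is to observe that Proposition \ref{bump} is essentially a direct specialization of Corollary \ref{pump} to the toric setting, so the proof will amount to verifying that the hypotheses of Corollary \ref{pump} are met. Recall that Corollary \ref{pump} requires two ingredients: first, that $(M,J)$ satisfies the hypotheses of Proposition \ref{centerville}, namely that $h^{1,0}=h^{2,0}=0$; and second, that the given torus $\mathbf T$ be a maximal torus in the complex automorphism group $\Aut(M,J)$.

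First, I would verify the Hodge-theoretic hypothesis. By Lemma \ref{van}, any toric manifold satisfies $H^{k,0}(M)=0$ for every $k>0$, and in particular $h^{1,0}=h^{2,0}=0$. Thus $(M,J)$ meets the hypotheses of Proposition \ref{centerville}, which in turn places ${\mathfrak F}(\Omega)$ in ${\mathfrak t}^*$ (this is just a restatement of Proposition \ref{toroidal}, which we have already deduced).

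Second, I would invoke Lemma \ref{max}, which asserts precisely that the $m$-torus $\mathbf T$ associated with the toric structure is a maximal torus in $\Aut(M,J)$. With both hypotheses now in hand, Corollary \ref{pump} applies verbatim to $(M,J,\Omega)$ and to the given ${\mathbf T}$-invariant K\"ahler metric $g$, yielding the identity
$$
\| {\mathfrak F}(\Omega )\|^2 = {\mathbb B}_{\mathbf T}^{-1}\bigl(  {\mathfrak F}(\Omega ) , {\mathfrak F}(\Omega )\bigr),
$$
which is exactly the claim. Because every step is a direct appeal to an already-proved result, there is no genuine obstacle in the argument; the only thing to be careful about is to state clearly that it is Lemma \ref{max} (not merely the abstract statement of Corollary \ref{pump}) that allows the given $\mathbf T$ to play the role of the maximal torus, since a priori one might have worried that the $m$-torus appearing in the definition of a toric manifold could be strictly smaller than a maximal torus of $\Aut(M,J)$.
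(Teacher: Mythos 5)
Your proposal is correct and matches the paper's own argument, which states that Proposition \ref{bump} follows by combining Lemma \ref{max} (maximality of $\mathbf T$) with Corollary \ref{pump}, the latter's Hodge-theoretic hypotheses being supplied by Lemma \ref{van}. Your additional remark about why Lemma \ref{max} is the essential input is a fair and accurate gloss on the same reasoning.
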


  Of course, Lemma  \ref{van}  has many other interesting applications. For example, 
  by Hodge symmetry, it implies 
 the Todd genus is given by 
    $$\chi (M, {\mathcal O})=\sum_k (-1)^kh^{0,k}(M)=1$$ for any toric manifold $M$. 
    Since the same argument could also be applied to any finite covering of $M$,
    whereas $\chi (M, \mathcal O )$ is multiplicative under coverings, one immediately
    sees that $M$ cannot have non-trivial finite covering spaces. In particular, 
     this  implies  that  $H_1 (M, \ZZ)=0$. 

 However, one can easily do much better.
 Choose  a $\mathbf T$-invariant
 K\"ahler metric $g$ with K\"ahler form $\omega$.
 Because $b_1(M) = 0$ by Lemma  \ref{van}, 
 the symplectic vector fields $\xi_1 , \ldots , \xi_m$
must then have Hamiltonians, so that $\xi_j=J\nabla f_j$ for suitable functions
 $f_1, \ldots , f_m$. Let $a_1, \ldots , a_m$ be real numbers which are
 linearly independent over $\QQ$, and let $f= \sum_j a_j f_j$. The corresponding 
 symplectic vector field $\xi=\sum_j a_j \xi_j$ is thus a Killing field for $g$, and 
 its flow is dense in the torus ${\mathbf T}<\Aut (M,J)$. Consequently, 
 $\xi$ vanishes only at the fixed points of ${\mathbf T}$. Since $\xi$ is Killing, with
 only isolated zeroes, it then follows that  $\nabla\xi$ 
 is  non-degenerate at each fixed point $p$ of $\mathbf T$, in the sense
 that it defines an isomorphism $T_p\to T_p$. 
 Since $\nabla_a\nabla_bf=\omega_{bc}\nabla_a\xi^c$, 
this implies  that the Hessian of $f$
 is non-degenerate at each zero of $df$; that is, $f$ is a Morse function 
 on $M$. However, since $\xi$ is the real part of a holomorphic vector field,
 $\bar{\partial}\partial^\# f=0$, and this is equivalent to saying that  the Riemannian Hessian 
  $\nabla \nabla f$  is everywhere $J$-invariant. Since the Riemannian Hessian coincides
  with the na\"{\i}ve  Hessian at a critical point, this shows that  every critical point of 
 $f$ must have even index. It follows  \cite{milmo} that $M$ is homotopy
 equivalent to a $CW$ complex consisting entirely of even-dimensional cells. 
In particular, we obtain the following:

 \begin{lem} \label{simple} 
 Any toric manifold  is simply connected, and has trivial homology in all  odd
 dimensions.  \end{lem}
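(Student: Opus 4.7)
The preceding paragraph has already done essentially all the heavy lifting: it constructs a Morse function $f$ on $M$ with only even-index critical points, and invokes Milnor's CW approximation to conclude that $M$ is homotopy equivalent to a CW complex $X$ built entirely from even-dimensional cells. My plan is simply to extract the two stated topological consequences from this model.

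Trivial odd homology is immediate from cellular homology: the cellular chain complex of $X$ satisfies $C_k(X) = 0$ for every odd $k$, so $H_k(M;\ZZ) \cong H_k(X;\ZZ) = 0$ for every odd $k$.

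For simple connectivity, I would pass to the $2$-skeleton, which determines $\pi_1$. Since $M$ is connected and $X$ has no $1$-cells, $X^{(0)}$ must consist of a single point---otherwise the $0$-skeleton, and hence $X$ itself, would be disconnected. Thus $X^{(2)}$ is obtained by attaching a collection of $2$-cells to a single point, which makes it a wedge of $2$-spheres, and in particular simply connected. Hence $\pi_1(M) \cong \pi_1(X) \cong \pi_1(X^{(2)})$ is trivial.

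The only point worth a moment's care is the assertion that $X^{(0)}$ reduces to a single vertex, and this follows at once from the connectedness of $M$ combined with the absence of $1$-cells; beyond that, the argument is just an unpacking of standard CW machinery, so there is no genuine obstacle here.
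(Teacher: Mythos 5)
Your proposal is correct and follows the paper's own route exactly: the paper likewise derives the lemma as an immediate consequence of the Morse-theoretic CW model with only even-dimensional cells, and your unpacking of the cellular-homology and $2$-skeleton arguments is just a careful spelling-out of what the paper leaves implicit. No gaps.
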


Finally, notice that Lemma \ref{van} implies  that the canonical line bundle
$K=\Lambda^{m,0}$ of a toric $m$-manifold has no non-trivial holomorphic 
sections. However, essentially
the same argument also shows that positive powers $K^\ell$ cannot have
non-trivial holomorphic  sections either, since the pairing of such a section with 
$(\Xi_1 \wedge \cdots  \wedge \Xi_m)^{\otimes \ell}$ would again result in a  constant
function which would have to vanish at $p$. Thus, all the plurigenera 
$p_\ell = h^0({\mathcal O}(K^{\ell}))$ of any toric manifold must vanish. In other words:

 \begin{lem} \label{helsing} 
 Any  toric manifold  has Kodaira dimension 
 $-\infty$.
 \end{lem}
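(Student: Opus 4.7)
The plan is to extend the vanishing argument for $H^{m,0}$ used in Lemma \ref{van} to all positive tensor powers of the canonical bundle $K = \Lambda^{m,0}$, showing that every plurigenus $p_\ell = h^0(M, \mathcal{O}(K^\ell))$ vanishes for $\ell \geq 1$. Since Kodaira dimension is defined via the growth rate of the plurigenera, showing $p_\ell = 0$ for all $\ell \geq 1$ immediately gives Kodaira dimension $-\infty$.

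To carry this out, I would first recall the key structural fact established earlier: on a toric $m$-manifold $(M,J)$, the torus action supplies $m$ holomorphic vector fields $\Xi_1, \ldots, \Xi_m$ which span $T^{1,0}M$ on the open dense orbit $\mathcal{U}$, but which simultaneously vanish at any $\mathbf{T}$-fixed point $p$ (guaranteed to exist by the Euler characteristic argument). Next, given any holomorphic section $\sigma \in H^0(M, \mathcal{O}(K^\ell))$, I would form the pairing
\[
F := \bigl\langle \sigma , (\Xi_1 \wedge \cdots \wedge \Xi_m)^{\otimes \ell}\bigr\rangle,
\]
which is globally defined because the tensor power $(\Lambda^m T^{1,0}M)^{\otimes \ell}$ is exactly the dual of $K^\ell$. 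Being a holomorphic function on the compact connected complex manifold $M$, the function $F$ must be constant; but evaluating at $p$, where every $\Xi_j$ vanishes, forces this constant to be zero.

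Finally I would upgrade ``$F \equiv 0$'' to ``$\sigma \equiv 0$'': on the open orbit $\mathcal{U}$, the vector fields $\Xi_1, \ldots , \Xi_m$ are linearly independent at each point, so $(\Xi_1 \wedge \cdots \wedge \Xi_m)^{\otimes \ell}$ is a nowhere-vanishing local frame for the dual of $K^\ell$ there. Hence $\sigma$ vanishes identically on $\mathcal{U}$, and therefore on all of $M$ by the identity principle for holomorphic sections. This gives $p_\ell = 0$ for every $\ell \geq 1$, and the Kodaira dimension is $-\infty$.

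The argument is essentially a direct iteration of the $H^{k,0}=0$ proof, so I do not anticipate a genuine obstacle; the only point requiring slight care is the tensorial bookkeeping identifying $(\Xi_1\wedge\cdots\wedge\Xi_m)^{\otimes \ell}$ as a section of the bundle dual to $K^\ell$, together with noting that the complex torus orbit $\mathcal{U}$ is indeed open and dense so that vanishing there propagates to all of $M$.
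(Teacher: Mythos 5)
Your proposal is correct and is essentially identical to the paper's own argument: the paper likewise pairs a section of $K^\ell$ with $(\Xi_1\wedge\cdots\wedge\Xi_m)^{\otimes\ell}$ to obtain a constant holomorphic function that must vanish at the fixed point $p$, concluding that all plurigenera vanish. Your final step, propagating the vanishing from the open orbit to all of $M$, merely makes explicit a point the paper leaves implicit.
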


\section{The  Virtual Action}

As  previously discussed  in connection with  (\ref{minoree}), a theorem of
Chen \cite{xxel} says that any K\"ahler metric $g$ on a compact 
complex manifold $M$ satisfies 
\begin{equation}
\label{shine}
\int_M (s-\bar{s})^2 d\mu_g \geq \| {\mathfrak F}(\Omega )\|^2~,
\end{equation}
where $\Omega =[\omega ]$ is the K\"ahler class of $g$; moreover, 
equality  holds iff $g$ is an extremal K\"ahler metric. On the other hand, 
\begin{equation}
\label{python} 
\int_M s^2 d\mu_g=\int_M (s-\bar{s})^2 d\mu_g+ \int_M \bar{s}^2 d\mu_g
\end{equation}
as may be seen 
by applying  the Pythagorean theorem to $L^2$-norms. Since $s$ is the trace of 
the Ricci tensor with respect to the metric, and because the Ricci form 
is essentially the curvature of the canonical line bundle, we also know that 
\begin{equation}
\label{ints} 
\int_M s~d\mu = \frac{ ~4\pi c_1\cdot\Omega^{m-1}}{(m-1)!}
\end{equation}
in complex dimension $m$; meanwhile, the volume of an $m$-dimensional 
K\"ahler $m$-manifold is just given by 
$$
\int_M d\mu = \frac{~\Omega^m}{m!}~.
$$
Hence
$$
\int_M  \bar{s}^2d\mu = \frac{\left(\int_Ms~d\mu\right)^2}{\int_M  d\mu}= 
\frac{16\pi^2 m}{(m-1)!}\frac{(c_1\cdot \Omega^{m-1})^2}{\Omega^m}
$$
and \eqref{shine} thus implies that 
\begin{equation}
\label{higherup} 
\int_M s^2 d\mu_g\geq
\frac{16\pi^2 m}{(m-1)!}\frac{(c_1\cdot \Omega^{m-1})^2}{\Omega^m}
+ \|{\mathfrak F}(\Omega) \|^2
\end{equation}
with equality iff  $g$ is an extremal K\"ahler  metric on $(M^{2m},J)$. 

Now specializing  to the case of complex dimension $m=2$,  we have 
$$
\int_M s^2 d\mu_g\geq
32\pi^2 \frac{(c_1\cdot \Omega )^2}{\Omega^2}+ \|{\mathfrak F}(\Omega) \|^2
$$
for any K\"ahler metric $g$ with K\"ahler class $[\omega ] = \Omega$ on a compact 
complex 
surface $(M^4, J)$. 
In other words, if we define a function on the 
K\"ahler cone  by 
$$
{\mathcal A} (\Omega ):=
\frac{(c_1\cdot \Omega )^2}{\Omega^2}+ \frac{1}{32\pi^2}\|{\mathfrak F}(\Omega) \|^2~,
$$
then 
\begin{equation}
\label{better}
\frac{1}{32\pi^2} \int_M s_g^2~d\mu_g \geq {\mathcal A}(\Omega)
\end{equation}
for any K\"ahler metric $g$ with K\"ahler class $\Omega$, with equality
iff $g$ is an extremal K\"ahler metric. The function ${\mathcal A}(\Omega)$
will be called the {\em virtual action}. Our normalization has been chosen so that 
${\mathcal A}(\Omega)\geq c_1^2 (M)$, with equality iff the Futaki invariant vanishes
and $\Omega$   is a multiple of   $c_1$. (Incidentally,  the latter  occurs
 iff  $\Omega$ is the K\"ahler class of a 
K\"ahler-Einstein metric    on $(M^4, J)$ \cite{aubin,sunspot,tian,yauma}.) The fact that the virtual action 
${\mathcal A}(\Omega)$ is homogeneous of
degree $0$ in $\Omega$ corresponds to the fact that the Calabi energy 
${\mathcal C}(g)$ is scale-invariant in real dimension four.

In complex dimension $m=2$,
 one important reason for studying the Calabi energy ${\mathcal C}$ is
the manner in which (\ref{weyl}) relates it to the {\em Weyl functional} 
$${\mathcal W}(g)= \int_{M} |W|^{2}_gd\mu_{g}$$
where the Weyl curvature $W$ is the
conformally invariant piece of the curvature tensor.
It is easy to check that $\mathcal W$ is also conformally invariant, and
may therefore be considered as a functional on the space of conformal classes
of Riemannian metrics.  Critical points of the Weyl
 functional are characterized \cite{bach,bes} by 
the vanishing of the {\em Bach tensor}  
$$B_{ab}:=(\nabla^c\nabla^d+\frac{1}{2}r^{cd})
W_{acbd}~$$  
and so are said to be  {\em Bach-flat};   obviously, this is  a
conformally invariant condition. 
The  Bianchi identities immediately imply that any Einstein metric
on a $4$-manifold is Bach-flat, and it therefore follows that any 
conformally Einstein metric is Bach-flat, too.  
The  converse, however,   is  false;
for example, self-dual and anti-self-dual metrics are also 
Bach-flat, and such metrics   exist on many compact $4$-manifolds  \cite{mcp2,lebicm,tasd}
 that do not admit 
Einstein metrics. 

When the Weyl functional ${\mathcal W}$ is restricted to the space 
of K\"ahler metrics,
equation  (\ref{weyl}) shows that it becomes equivalent to the Calabi energy ${\mathcal C}$. 
Nonetheless, the following result \cite{chenlebweb} may come as something of a surprise: 

\begin{prop} 
\label{bfk}
Let  $g$ be a K\"ahler metric  on a compact complex surface. 
Then $g$ is Bach-flat if and only if  
\begin{itemize}
\item $g$ is an extremal K\"ahler metric, and
\item  its 
  K\"ahler class $\Omega$
is a  critical point of the virtual action
${\mathcal A}$. 
\end{itemize}
\end{prop}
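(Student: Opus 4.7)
The strategy is to exploit identity (\ref{weyl}), which on a compact K\"ahler surface reads $\mathcal{W}(g) = -12\pi^2\tau(M) + \tfrac{1}{12}\mathcal{C}(g)$, and to interpret Bach-flatness as criticality of $\mathcal{W}$ restricted to the space of K\"ahler metrics, decomposed along its two natural fibrations.

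By definition, $g$ is Bach-flat iff it is a critical point of $\mathcal{W}$ under arbitrary Riemannian variations. A weaker condition is criticality for $\mathcal{W}$ under variations through K\"ahler metrics compatible with $J$, and the first step of the plan is to show that this weak criticality is already equivalent to (a) and (b). Decompose any infinitesimal K\"ahler variation into a change $\Theta = \dot\Omega_0 \in H^{1,1}(M,\RR)$ of the K\"ahler class plus a variation of the K\"ahler potential within the class. Criticality under potential variations holding $\Omega$ fixed is the extremal K\"ahler condition by Calabi's original Euler-Lagrange calculation, yielding (a). Granting (a), I would analyze variations of $\Omega$ by using openness of the extremal K\"ahler cone \cite{ls2} to build, through any tangent $\Theta$, a smooth path $\Omega_t$ with $\dot\Omega_0 = \Theta$ together with a smooth family $g_t$ of extremal K\"ahler metrics representing $\Omega_t$ and $g_0=g$; along this path equality holds in (\ref{better}), so $\mathcal{C}(g_t) = 32\pi^2 \mathcal{A}(\Omega_t)$, and criticality of $\mathcal{C}$ reduces to $d\mathcal{A}|_\Omega(\Theta) = 0$. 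Since $\Theta$ is arbitrary, this is (b). For the reverse implication, along any K\"ahler path from $g$ the function $f(t):=\mathcal{C}(g_t) - 32\pi^2\mathcal{A}(\Omega_t)$ is nonnegative by (\ref{better}) with $f(0)=0$ by (a), hence $f'(0)=0$, and combined with (b) this gives $\tfrac{d}{dt}|_0\mathcal{C}(g_t)=0$.

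The second, and I expect principal, step is to upgrade this weak criticality on the K\"ahler submanifold of Riemannian metrics to actual Bach-flatness. The tangent space to K\"ahler metrics at $g$ (with $J$ fixed) corresponds only to those symmetric $2$-tensors $h$ whose associated $2$-form $\eta(\cdot,\cdot) = h(J\cdot,\cdot)$ is closed and of type $(1,1)$, and this is a proper subspace of the full tangent space to Riemannian metrics. A priori, weak criticality only controls the projection of the Bach tensor $B$ onto this subspace. To conclude $B \equiv 0$, one must invoke the rigid tensorial structure forced on $B$ by the K\"ahler identities together with the distinguished eigenvalue pattern $(\tfrac{s}{6},-\tfrac{s}{12},-\tfrac{s}{12})$ of $W^+$ and the $J$-invariance of the Ricci tensor; these conspire to show that $B$ itself already lies in the subspace dual to K\"ahler variations, so that vanishing of its K\"ahler components forces $B$ to vanish entirely. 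This tensorial computation, which is the main obstacle of the argument, is the core content of \cite{chenlebweb}.
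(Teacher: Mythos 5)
First, a point of reference: the paper itself offers no proof of Proposition \ref{bfk} --- it is quoted directly from \cite{chenlebweb} --- so your proposal can only be measured against the argument actually carried out there. Your first step (Bach-flatness implies criticality of $\mathcal{W}$ along K\"ahler variations; fixed-class potential variations yield the extremal equation, hence (a); and, using the openness and smooth dependence of extremal representatives from \cite{ls2}, class variations along families of extremal metrics turn the sharp bound (\ref{better}) into $\mathcal{C}(g_t)=32\pi^2\mathcal{A}(\Omega_t)$, hence (b)) is sound and is essentially how the forward implication and the reduction are organized in \cite{chenlebweb}. The nonnegativity trick for $f(t)=\mathcal{C}(g_t)-32\pi^2\mathcal{A}(\Omega_t)$ is also fine.

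The genuine gap is in your second step, which you correctly identify as the crux but whose stated mechanism does not suffice. Knowing that $B$ is $J$-invariant (which does follow from the eigenvalue structure of $W^+$ on a K\"ahler surface) only tells you that $B$ is encoded by a $(1,1)$-form $\beta:=B(J\cdot,\cdot)$. Criticality in the K\"ahler directions then gives you exactly two pieces of information: the pairing of $\beta$ with $i\partial\bar\partial u$ vanishes for all potentials $u$ (a fourth-order equation equivalent to extremality), and the pairing of $\beta$ with harmonic $(1,1)$-forms vanishes, i.e.\ the harmonic projection of $\beta$ is zero. By the Hodge decomposition this leaves open the possibility that $\beta$ is a nonzero co-exact form, so ``vanishing of the K\"ahler components'' does \emph{not} force $\beta=0$: the annihilator of the space of K\"ahler variations inside the $J$-invariant tensors is strictly larger than $\{0\}$. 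The missing ingredient --- and the real content of \cite{chenlebweb} --- is that when $g$ is \emph{extremal} the form $\beta$ is closed (it is co-closed for free, since every Bach tensor is divergence-free), hence harmonic; only then is $\beta$ determined by its de Rham class, which is precisely what the derivative $d\mathcal{A}|_\Omega$ detects, and only then does (b) force $B\equiv 0$. Your proof as written asserts the conclusion of this computation without supplying the one identity ($d\beta=0$ for extremal $g$) on which it actually rests.
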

This gives rise to  a remarkable method  of constructing Einstein metrics, courtesy of a
 beautiful result of Derdzi{\'n}ski \cite[Proposition 4]{derd}: 
 
 \begin{prop}
 If 
 the scalar curvature $s$ of a Bach-flat K\"ahler metric $g$ on a complex surface $(M^4,J)$ 
 is not identically zero,
then the conformally related metric $h=s^{-2}g$ is Einstein on the open set $s\neq 0$ where it is defined.
\end{prop}

\section{Toric Surfaces}
\label{focus} 

We will  now prove Theorem \ref{behold} by   computing 
 the virtual action ${\mathcal A} (\Omega )$ for
any K\"ahler class on a toric surface. An important intermediate step in this process
involves an  explicit computation of the 
Futaki invariant ${\mathfrak F}(\Omega )$. Up to a  universal constant, our answer agrees
with that of  various other authors 
  \cite{dontor,fuma1,mabary,shelukhin}, but determining   the correct constant is crucial 
 for our  purposes. For this reason, our first proof 
 will be  based on the author's formula \cite{mcp2} for the scalar curvature of 
 a K\"ahler surface with isometric $S^1$ action.

By  a {\em toric surface}, we mean 
a toric manifold $(M,J, {\mathbf T})$ of complex dimension two. 
This is equivalent\footnote{In one direction,
this equivalence follows  because any simply connected 
compact complex surface is of K\"ahler type \cite{nick,siu}
and has positive Euler characteristic. On the other hand, the converse  follows  from 
Lemma \ref{simple}. } to saying that 
$(M^4, J)$ is  a   simply connected compact complex surface 
equipped with a $2$-torus ${\mathbf T} < \Aut (M,J)$.
By Castelnuovo's criterion \cite{bpv,gh},
Lemma \ref{van} and Lemma \ref{helsing}, any toric surface 
 $(M,J)$   can be obtained from either $\CP_2$ or
 a Hirzebruch surface by blowing up points. Indeed, since the holomorphic vector fields 
 generating the torus action on $M$ automatically descend to the minimal model,
 the toric structure of $(M,J)$ can be obtained from a toric structure on  $\CP_2$ or
 a Hirzebruch surface
 by iteratively blowing up fixed points of the torus action. For more direct proofs, using the toric
 machinery of 
  fans or moment polytopes,  see \cite{fultor,guiltor}.

 Let   $(M^4, J, {\mathbf T}, \Omega)$ now be a toric surface with fixed K\"ahler class. 
By averaging, we can then find a $\mathbf T$-invariant K\"ahler metric $g$ on $(M,J)$ with 
K\"ahler form $\omega \in \Omega$. Choose an isomorphism 
${\mathbf T}\cong \RR^2/\ZZ^2$,
and denote the corresponding generating vector fields of period $1$
by $\xi_1$ and $\xi_2$. 
Since $b_1(M)=0$, there are Hamiltonian functions $x_1$ and $x_2$ on 
$M$ with $\xi_j= J\grad x_j$, $j=1,2$. This makes $(M, \omega)$ into
a Hamiltonian $T^2$-space in the sense of \cite{guiltor}. In particular, the image of $M$
under  $\vec{x}=(x_1,x_2)$ is  \cite{atcvx,gscvx}
 a convex  polygon $P\subset \RR^2$ whose area is
exactly  the volume of 
$(M,g)$.  
The map $\vec{x}: M\to \RR^2$ is called the {\em moment map}, and 
 its image $P=\vec{x} (M)$ will be called the 
{\em moment polygon}. Of course, since we have not
insisted that 
the Hamiltonians $x_k$  have integral zero,  our moment map is  only  determined 
up to  translations of
$\RR^2$. Modulo this ambiguity, however, the moment polygon is
 uniquely determined by $(M,\omega, {\mathbf T})$, together with the 
 chosen basis $(\xi_1, \xi_2)$ for the Lie algebra $\mathfrak t$ of of $\mathbf T$. Moreover, 
since a straightforward  Moser-stability argument shows that any two $\mathbf T$-invariant 
K\"ahler forms in $\Omega$ are $\mathbf T$-equivariantly symplectomorphic, the moment polygon
  really only  depends on $(M,J, \Omega, (\xi_1, \xi_2))$.
However, outer automorphisms of $\mathbf T$ can be used to alter $(\xi_1, \xi_2)$  
by an $\mathbf{SL}(2,\ZZ)$ transformation, and this in turn changes the moment polygon by 
an $\mathbf{SL}(2,\ZZ)$ transformation of $\RR^2$. Moreover, since the vertices of 
$P$ correspond to the fixed points of $\mathbf T$, and because the action of 
$\mathbf T$ on the tangent space  of any fixed point can be identified 
with that of the diagonal torus ${\mathbf U}(1)\times {\mathbf U}(1)\subset {\mathbf U}(2)$, 
 a neighborhood of  any   corner of $P$ 
can be transformed into   a neighborhood of the origin in the positive quadrant of  $\RR^2$
by  an element of $\mathbf{SL}(2,\ZZ)$  and a translation \cite{delzant}. 
Polygons with the latter property are said to be  {\em Delzant}, and 
 any Delzant polygon arises from a uniquely determined toric surface,
 equipped with a uniquely
determined K\"ahler class \cite{guiltor}. 

We now introduce 
a measure $d\lambda$ on the boundary $\partial P$ of our moment polygon. To
do this, first notice that each edge
of $P$ is the image of a rational curve ${C}_{\imath}\cong \CP_1$ in $(M,J)$ which is fixed by an $S^1$ 
subgroup of $T^2$, and hence by a $\CC^\times$ subgroup of the complexified 
torus  $\CC^\times \times \CC^\times$. We then define the measure $d\lambda$ along the 
edge $\ell_\imath= \vec{x}(C_\imath)$ to be the push-forward, via $\vec{x}$, of the smooth area 
measure on $C_\imath$ given by the restriction  of the K\"ahler form $\omega$. 
Since a rational linear combination of the $x_k$ is a Hamiltonian for
rotation of $C_\imath$ about  two fixed points, 
$d\lambda$  is a constant times $1$-dimensional Lebesgue measure on the
line segment $\ell_\imath$, with total length 
$$\int_{\ell_\imath} d\lambda = \int_{C_\imath}\omega  = : {\zap A}_\imath$$
equal to the area of corresponding holomorphic curve in $M$. Here the index $\imath$ 
is understood to run  over the edges of $\partial P$. 

When an edge is parallel to either axis, 
$d\lambda$  just becomes standard Euclidean length measure. 
More generally, on an arbitrary edge, it must coincide with the pull-back of Euclidean length 
via any  $\mathbf{SL}(2, \ZZ )$ 
transformation which sends the edge to a segment parallel to an axis. 
Because $P$ is a Delzant polygon,
this contains
enough information to completely 
 determine  $d\lambda$,   and  leads to 
  a consistent definition of the measure because the stabilizer 
$$
\left\{\left. 
\pm \left(\begin{array}{cc} 1 & k \\0 &  1\end{array}\right)~\right|~ k\in \ZZ
\right\}
$$
of the $x_1$-axis in $\mathbf{SL}(2,\ZZ )$  preserves Euclidean length 
 on this axis. However, the Euclidean algorithm of elementary number theory implies that 
 every pair $(p,q)$ of relatively prime non-zero integers belongs to the $\mathbf{SL}(2, \ZZ )$-orbit of 
 $(1,0)$. One can therefore compute edge-lengths with respect to 
 $d\lambda$ by means of  the following recipe: 
 Given an edge  of $P$ which is not parallel to either axis, its slope 
 $m$ is a non-zero rational number, and so can be expressed 
 in {lowest terms} as  $m=q/p$, 
where $p$ and $q$ are relatively prime non-zero integers.
The displacement vector $\vec{v}$ representing the 
difference between the two endpoints of the edge can thus be written as 
$\vec{v}= (up,uq)$ for some $u\in \RR -\{0\}$. The  
length of the edge with respect to $d\lambda$ then equals $|u|$. 

We can now  associate two different barycenters  with our moment polygon. 
First, there is the barycenter $\bar{\vec{x}}= (\bar{x}_1, \bar{x}_2)$ of the interior
of $P$, as defined by 
$$\bar{x}_k = \fint_P x_k ~d{\zap a} = \frac{\int_P x_k d{\zap a}}{\int_P  d{\zap a}}$$
where $d{\zap a}$ is standard $2$-dimensional Lebesgue measure in $\RR^2$. 
Second, there is the barycenter $\langle\vec{x}\rangle= (\langle {x}_1\rangle, \langle {x}_2\rangle)$ of the perimeter $\partial P$, defined  by 
$$
\langle x_k\rangle = \fint_{\partial P} x_k ~d\lambda=
 \frac{\int_{\partial P} x_k d\lambda}{\int_{\partial P}  d\lambda}
$$
These two barycenters certainly need not coincide in general. It is   therefore natural
 to consider the 
 displacement vector 
$$
\vec{\mathfrak D} = \langle\vec{x}\rangle - \bar{\vec{x}}
$$
that measures their separation. Notice that $\vec{\mathfrak D}$ is translation invariant ---
it is unchanged if we alter the Hamiltonians $(x_1,x_2)$ by adding  constants. 

Next, we introduce the moment-of-inertia matrix $\Pi$ of $P$, which encodes
the moment of inertia of the polygon about an arbitrary axis in $\RR^2$ passing 
through its barycenter
$\bar{\vec{x}}$. Thus $\Pi$ is the positive-definite symmetric $2\times 2$ matrix with 
entries given by  
$$
{\Pi}_{jk} =\int_P (x_j-\bar{x}_j)(x_k-\bar{x}_k) d{\zap a}
$$
where 
$d{\zap a}$ once again denotes the usual Euclidean area form on the interior of $P$, and 
exactly  equals the push-forward of
the metric volume measure on $M$. 
For our purposes, it is important to notice that $\Pi$ is always an invertible matrix.

Finally, let $|\partial P|= \int_{\partial P} d\lambda = \sum_\imath {\zap A}_\imath$ denote
the perimeter of the moment polygon with respect to the measure $d\lambda$ introduced above, and let
 $|P| = \int_Pd{\zap a}$ denote the area of its interior in the usual sense.
With these notational conventions,  we are now ready to 
state the main result of this section:

\begin{thm} \label{chiaro} 
If $(M, J, \Omega)$ is any toric surface with fixed K\"ahler class,  then 
\begin{equation}
\label{sounder}
{\mathcal A}(\Omega) =\frac{|\partial P|^2}{2}\Big( \frac{1}{|P|~} + \vec{\mathfrak D}\cdot  {\Pi}^{-1} \vec{\mathfrak D}
\Big)
\end{equation}
where $P$ is the moment polygon determined by the given $T^2$-action. 
\end{thm}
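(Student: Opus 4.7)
The plan is to compute the two pieces of $\mathcal{A}(\Omega) = (c_1\cdot\Omega)^2/\Omega^2 + \|{\mathfrak F}(\Omega)\|^2/(32\pi^2)$ separately and show they match the two terms on the right-hand side of (\ref{sounder}). The central ingredient for both pieces is a boundary-integral formula for $\int_M s\cdot f\, d\mu_g$ when $f$ is affine linear in the moment coordinates; this is where the explicit formulas of \cite{klp,ls} (ultimately the Abreu/Donaldson calculus for toric metrics) enter. Once this formula is in hand, the rest is bookkeeping.

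First I would dispose of the topological term. Since $d\mu = \omega^2/2$ and the $\xi_j$ have period $1$, the push-forward of $d\mu$ by the moment map is Lebesgue measure $d{\zap a}$ on $P$; hence $\Omega^2 = 2|P|$. For the scalar curvature integral, I would invoke the Abreu/Donaldson identity, which says that for every smooth function $f$ on $\bar P$,
\begin{equation*}
\int_M s\cdot f \, d\mu \;=\; 4\pi \int_{\partial P} f\, d\lambda \;-\; \text{(bulk term involving $f_{,ij}$)},
\end{equation*}
with the measure $d\lambda$ being exactly the one extracted from the Guillemin boundary conditions on the symplectic potential; this is why $d\lambda$ has the intrinsic $SL(2,\ZZ)$-rescaling described in the text. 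Specializing to $f\equiv 1$ gives $\int_M s\, d\mu = 4\pi|\partial P|$, whence $c_1\cdot \Omega = |\partial P|$ via the topological formula for $\bar{s}$. Combining, $(c_1\cdot\Omega)^2/\Omega^2 = |\partial P|^2/(2|P|)$, which is the first term of (\ref{sounder}).

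Next I would compute the Futaki term using Proposition \ref{bump}, which reduces $\|{\mathfrak F}(\Omega)\|^2$ to $\mathbb{B}_{\mathbf T}^{-1}({\mathfrak F}(\Omega),{\mathfrak F}(\Omega))$ on $\mathfrak t^*$. The generators $\xi_k = J\grad x_k$ have holomorphy potentials $x_k$, so (\ref{foot}) gives
\begin{equation*}
{\mathfrak F}(\Omega)(\xi_k) \;=\; -\int_M (s-\bar{s}) x_k \, d\mu.
\end{equation*}
Applying the boundary-integral formula to the affine function $f=x_k$ (whose Hessian vanishes, killing the bulk term) gives $\int_M s\, x_k\, d\mu = 4\pi|\partial P|\langle x_k\rangle$, while $\bar{s}\int_M x_k\, d\mu = (4\pi|\partial P|/|P|)\cdot|P|\bar{x}_k = 4\pi|\partial P|\bar{x}_k$. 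Subtracting, ${\mathfrak F}(\Omega)(\xi_k) = -4\pi|\partial P|\,\mathfrak{D}_k$. Meanwhile, the normalized Hamiltonian for $\xi_k$ is $x_k-\bar{x}_k$, so
\begin{equation*}
\mathbb{B}_{\mathbf T}(\xi_j,\xi_k) = \int_M (x_j-\bar x_j)(x_k-\bar x_k)\, d\mu = \int_P (x_j-\bar x_j)(x_k-\bar x_k)\, d{\zap a} = \Pi_{jk},
\end{equation*}
i.e.\ the matrix of $\mathbb{B}_{\mathbf T}$ in the basis $\{\xi_1,\xi_2\}$ is exactly the moment-of-inertia matrix $\Pi$. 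Hence $\|{\mathfrak F}(\Omega)\|^2 = 16\pi^2|\partial P|^2\, \vec{\mathfrak D}\cdot \Pi^{-1}\vec{\mathfrak D}$, and dividing by $32\pi^2$ produces the second term of (\ref{sounder}). Summing the two contributions gives (\ref{sounder}).

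The main obstacle is establishing the boundary-integral identity $\int_M s f\, d\mu = 4\pi\int_{\partial P} f\, d\lambda$ for affine $f$ with the correct constant; everything else is a routine assembly of previously established formulas. This requires realizing a $\mathbf{T}$-invariant Kähler metric via a symplectic potential on $P$, verifying that the Guillemin-type boundary conditions on the inverse Hessian produce precisely the lattice-normalized measure $d\lambda$ defined in the paper, and then integrating Abreu's formula $s=-u^{ij}{}_{,ij}$ by parts twice. A secondary bookkeeping point is checking that the conventions for holomorphy potentials, periods of the $\xi_k$, and the factor $\omega^2/2$ in $d\mu$ are all consistent, so that no extraneous factors of $2\pi$ appear in the final identification of $\mathbb{B}_{\mathbf T}$ with $\Pi$.
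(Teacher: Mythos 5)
Your proposal is correct, and it assembles the theorem exactly as the paper does: the splitting of ${\mathcal A}(\Omega)$ into the topological term $(c_1\cdot\Omega)^2/\Omega^2=|\partial P|^2/2|P|$ and the Futaki term, the identification of ${\mathbb B}_{\mathbf T}$ with the moment-of-inertia matrix $\Pi$ via the normalized Hamiltonians $x_k-\bar{x}_k$, the appeal to Proposition \ref{bump}, and the formula $\vec{\mathfrak F}=-4\pi\,|\partial P|\,\vec{\mathfrak D}$ all appear in \S\ref{focus} in just this form. Where you genuinely diverge is in the engine driving the one nontrivial input, the boundary-integral identity $\int_M s\,x_k\,d\mu=4\pi\int_{\partial P}x_k\,d\lambda$. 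You propose to obtain it from the Abreu--Donaldson symplectic-potential calculus ($s=-u^{ij}{}_{,ij}$, two integrations by parts, Guillemin boundary conditions producing the lattice-normalized measure $d\lambda$), whereas the paper proves the equivalent statement (Proposition \ref{stage}) by fibering $M$ over the stable quotient $\Sigma=M\sslash\CC^\times$, writing $s\,d\mu=\bigl[2\check{\rho}-\tfrac{d^2}{dt^2}\check{\omega}\bigr]\wedge dt\wedge\theta$ as in \cite{klp,ls}, and applying the orbifold Gauss--Bonnet theorem on each level set of the Hamiltonian; the author states explicitly that this route was chosen because it pins down the overall constant, which is the whole point of the exercise. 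Your route is equally legitimate, but the step you defer --- checking that the conventions (period-$1$ generators, $d\mu=\omega^2/2$, the paper's $d\lambda$) yield exactly $4\pi$ rather than some other multiple of $\pi$ --- is precisely the delicate part. Note that you can calibrate it independently of the symplectic-potential formalism: the paper derives $c_1=\sum_\imath C_\imath$ from the adjunction formula (equation (\ref{bracelet})), whence $c_1\cdot\Omega=\sum_\imath {\zap A}_\imath=|\partial P|$ and $\int_M s\,d\mu=4\pi|\partial P|$ without any boundary formula; comparing with your identity at $f\equiv 1$ then fixes the constant, after which the affine case carries no further normalization risk since the bulk term vanishes identically. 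With that calibration supplied, your argument is complete and correct.
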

 
The proof of Theorem \ref{chiaro} crucially depends on a computation of the
Futaki invariant, which, we recall, 
 is a character on the Lie algebra of holomorphic vector fields. 
Let us therefore consider the holomorphic 
vector fields  $\Xi_k= \nabla^{1,0}x_k$ whose holomorphy
potentials are the Hamiltonians of the periodic Killing fields 
$\xi_k$. These are explicitly given
by 
$$
\Xi_k = -\frac{1}{2} \left( J\xi_k + i \xi_k\right). 
$$

\begin{prop} \label{stage} 
Suppose that $(M,J,\Omega)$ is a toric surface with fixed K\"ahler class, and 
let $\Xi_k$ be the    generators of  the 
associated complex torus action,  normalized as above. Let 
$${\mathfrak F}_k:= {\mathfrak F}(\Xi_k,\Omega )$$
be the corresponding components of the Futaki invariant of $(M,J,\Omega)$. 
Then the vector $\vec{\mathfrak F}= ({\mathfrak F}_1, {\mathfrak F}_2)$
is explicitly given by 
$$\vec{\mathfrak F} = -4\pi \, |\partial P| \, \vec{\mathfrak D}$$
where $|\partial P|$  again denotes the weighted perimeter of the 
moment polygon $P$, and  $\vec{\mathfrak D}$ is  again the vector joining
the  barycenters of the interior and weighted boundary of  $P$. 
\end{prop}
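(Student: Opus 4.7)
\medskip\noindent\textbf{Proof proposal.}
The plan is to start from the holomorphy-potential form \eqref{foot} of the Futaki invariant and then reduce every integral on $M$ to an integral over $P$ or $\partial P$. Since $\Xi_k = \nabla^{1,0}x_k$, the Hamiltonian $x_k$ of the Killing field $\xi_k$ is a holomorphy potential for $\Xi_k$, so for any $\mathbf T$-invariant K\"ahler metric $g$ with $[\omega]=\Omega$ one has
$$ {\mathfrak F}_k \;=\; -\int_M (s-\bar s)\,x_k\,d\mu_g \;=\; -\int_M s\,x_k\,d\mu_g \;+\; \bar s \int_M x_k\,d\mu_g. $$
I will evaluate the three ingredients $\int_M x_k\,d\mu$, $\bar s$, and $\int_M s\,x_k\,d\mu$ separately in polygon terms and then combine.

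The first two are bookkeeping. Since the $\xi_k$ are $1$-periodic, action-angle coordinates on the open dense orbit give $\omega = dx_1\wedge d\theta_1 + dx_2\wedge d\theta_2$ with $\theta_k\in \RR/\ZZ$, so $d\mu=\omega^2/2$ pushes forward under $\vec x$ to the Lebesgue measure $d{\zap a}$ on $P$. Hence $\Vol(M,g)=|P|$ and $\int_M x_k\,d\mu = \int_P x_k\,d{\zap a} = |P|\,\bar x_k$. Similarly, the anticanonical class of a smooth toric surface is represented by $\sum_\imath C_\imath$, the sum of the invariant rational curves, so by the very definition of $d\lambda$ one has $c_1\cdot\Omega = \sum_\imath{\zap A}_\imath = |\partial P|$, and the topological identity for $\bar s$ in complex dimension two yields $\bar s = 8\pi(c_1\cdot\Omega)/\Omega^2 = 4\pi|\partial P|/|P|$.

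The real work is the third piece, $\int_M s\,x_k\,d\mu$. Here I would invoke the explicit toric description of the scalar curvature used in \cite{klp,ls}: on the open orbit, $g$ is determined by a strictly convex symplectic potential $u:P\to\RR$ with prescribed Delzant boundary behavior, and the scalar curvature satisfies Abreu's identity $s = -\sum_{i,j}\partial_i\partial_j u^{ij}$, where $(u^{ij})$ is the inverse of the Hessian $(\partial_i\partial_j u)$. Integrating $s\cdot x_k$ against $d{\zap a}$ on $P$ and integrating by parts twice, the bulk term vanishes identically because $x_k$ is affine ($\partial_i\partial_j x_k\equiv 0$), while the two boundary contributions combine, using the standard Guillemin boundary conditions on each edge, into a universal multiple of $\int_{\partial P} x_k\,d\lambda$. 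The universal constant is fixed by the special case $x_k\equiv 1$, which must reproduce the topological identity $\int_M s\,d\mu = 4\pi\,c_1\cdot\Omega = 4\pi|\partial P|$; this forces
$$ \int_M s\, x_k\,d\mu \;=\; 4\pi\int_{\partial P} x_k\,d\lambda \;=\; 4\pi|\partial P|\,\langle x_k\rangle. $$

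Substituting the three ingredients into the Futaki formula gives
$$ {\mathfrak F}_k \;=\; -4\pi|\partial P|\,\langle x_k\rangle + \frac{4\pi|\partial P|}{|P|}\cdot |P|\,\bar x_k \;=\; -4\pi|\partial P|\,(\langle x_k\rangle-\bar x_k) \;=\; -4\pi|\partial P|\,{\mathfrak D}_k, $$
so $\vec{\mathfrak F} = -4\pi|\partial P|\,\vec{\mathfrak D}$. The main obstacle is precisely the determination of the universal constant $4\pi$ in the boundary calculation: Shelukhin \cite{shelukhin} already identifies $\vec{\mathfrak F}$ up to an overall scalar, so the substantive work lies in the careful analysis of $u^{ij}$ at the edges and corners of $P$, most cleanly cross-checked against the topological formula for $\int_M s\,d\mu$.
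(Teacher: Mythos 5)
Your proposal is correct, but it reaches the formula by a genuinely different route from the paper. You work entirely in action--angle coordinates on the polygon: the metric is encoded by a symplectic potential $u$, the scalar curvature by Abreu's formula $s=-\sum_{i,j}\partial_i\partial_j u^{ij}$, and $\int_M s\,x_k\,d\mu$ is evaluated by the double integration by parts underlying Donaldson's linear functional on toric varieties \cite{dontor} --- the bulk term dies because $x_k$ is affine, and the Guillemin boundary conditions convert the boundary terms into a universal multiple of $\int_{\partial P}x_k\,d\lambda$, with the constant $4\pi$ pinned down by the case $f\equiv 1$ together with $c_1=\sum_\imath C_\imath$. The paper instead fibres $M$ over $\Sigma\times[a,b]$ via the stable quotient of a single $\CC^\times$-action, uses the explicit scalar-curvature density $s\,d\mu=\bigl[2\check\rho-\tfrac{d^2}{dt^2}\check\omega\bigr]\wedge dt\wedge\theta$ from \cite{klp,ls}, and computes $\int_M ts\,d\mu$ edge by edge via the orbifold Gauss--Bonnet theorem, so that the constant emerges directly from the computation rather than from a normalization check. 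Your route is shorter and stays closer to the combinatorics of $P$; the paper's route trades that for never having to analyze the degeneration of $u^{ij}$ at $\partial P$. All your bookkeeping identities ($\Vol(M,g)=|P|$, $c_1\cdot\Omega=|\partial P|$, $\bar s=4\pi|\partial P|/|P|$) agree with the paper's.

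The one step you should not treat as a mere ``cross-check'': fixing the constant by testing against $f\equiv 1$ is only legitimate once you have \emph{proved} that the boundary contribution has the form $C\int_{\partial P}f\,d\lambda$ with $C$ independent of the metric, the polygon, and the edge. That universality is exactly the content of the edge-and-corner asymptotics of $u^{ij}$ dictated by the Delzant/Guillemin boundary behavior, and it is where the lattice normalization of $d\lambda$ enters; it is established in \cite{dontor}, so your argument closes, but as written the normalization check does not substitute for it.
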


\begin{proof} More explicitly, the assertion is  that 
\begin{equation}
\label{norm}
{\mathfrak F}_k = -4\pi  \sum_{\imath}
\Big(\langle x_k \rangle_\imath- \bar{x}_k\Big)~{\zap A}_\imath
\end{equation}
where $\bar{x}_k$ is once again the $k^{\rm th}$ coordinate of the barycenter of 
the interior of the moment polygon $P$,  $\langle x_k \rangle_\imath$
is the $k^{\rm th}$ coordinate  of the center of the $\imath^{\rm th}$ 
edge of $P$, and ${\zap A}_\imath$ is the weighted length of $\imath^{\rm th}$ 
edge. 

We will now prove (\ref{norm}) using  a  method \cite{klp,ls} which 
is  broadly applicable to $\CC^\times$-actions, but which 
 nicely simplifies in  the toric case.  We thus  make a choice of 
$k=1$ or $2$, and set $\Xi=\Xi_k$, $\xi=\xi_k$,  and $x=x_k$
for this choice of $k$. In order to facilitate comparison with \cite{klp,mcp2,ls}, 
set  $\eta = \xi/2\pi$,
so that $\eta$ is a symplectic vector field of period $2\pi$, with Hamiltonian 
$t= x/2\pi$. 
 Let $\Sigma = M\sslash \CC^\times$ be the stable quotient
of $(M,J)$ by the action generated by $\Xi$, and observe that  the following
interesting special properties hold in our   toric setting:
\begin{itemize}
\item the stable quotient $\Sigma$ has genus $0$; and 
\item all the isolated $\CC^\times$ fixed points  
project to just two points $q_1,q_2\in\Sigma$.
\end{itemize}

Let ${\mathbf a},{\mathbf b}\in \RR$, respectively,  denote the  minimum and maximum 
of the Hamiltonian $t$, so that  $t(M)=[{\mathbf a},{\mathbf b}]$. 
If  $t^{-1}(\{{\mathbf a}\})$ or  $t^{-1}(\{{\mathbf b}\})$ is an isolated
fixed point, blow up  $M$ there to obtain $\hat{M}$,
and pull the metric $g$ back to $\hat{M}$ as a degenerate metric; otherwise, 
let $\hat{M}=M$. We then have a  holomorphic quotient map $\varpi: \hat{M}\to \Sigma$. 
Let $C_+$ and $C_-$ be the holormorphic curves in $\hat{M}$ given by 
$t^{-1}({\mathbf b})$ and $t^{-1}({\mathbf a})$, respectively. Except when they are just artifacts produced by
 blowing up, 
the curves 
$C_\pm$ number among the rational curves $C_\imath$ which project to the 
sides of the moment polygon $P$; the others, after proper transform if necessary, 
form a sub-collection   $\{ E_\jmath\} \subset \{ C_\imath \}$ 
characterized by $\varpi^{-1} (\{ q_1 , q_2\}) = \cup_\jmath E_\jmath$ for
a preferred pair of distinct points $q_1,q_2\in\Sigma$. 
Each $E_\jmath$ is the closure of  a $\CC^\times$-orbit, and we will let 
$m_\jmath\in \ZZ^+$ denote the order of the isotropy of  $\CC^\times$ acting on
the relevant orbit. 
Also let $t_\jmath^-$ and $t_\jmath^+$  denote the minimum and maximum of $t$
on $E_\jmath$, so that   $t(E_\jmath) = [t_\jmath^-, t_\jmath^+]$, and  observe that  
 $$\langle t \rangle_\jmath:= (t_\jmath^- + t_\jmath^+)/2$$
coincides with 
 the  average value of $t$ on $E_\jmath$ with respect to $g$-area measure.

Let us now define 
  $\wp : \hat{M}\to \Sigma\times [{\mathbf a},{\mathbf b}]$ to be the map  $\varpi \times t$. 
 If $p_1, \ldots , p_m$ are the images in 
 $\Sigma\times ({\mathbf a},{\mathbf b})$ of the isolated 
fixed points, and if 
$$X=[\Sigma\times ({\mathbf a},{\mathbf b})]-\{ p_1, \ldots , p_m\},$$
then the open dense set $Y=\wp^{-1}(X)\subset \hat{M}$
map be viewed as an orbifold $S^1$-principal bundle over $X$, and comes
equipped with a unique connection 1-form $\theta$ whose kernel is 
$g$-orthogonal to $\eta$ and which satisfies 
$\theta (\eta )=1$. 
We may now express the given  K\"ahler metric $g$ 
as 
$$ 
g=w\, \check{g}(t)+w\, dt^{\otimes 2}+w^{-1}\theta^{\otimes 2}~,
$$
for a positive functions $w>0$ on $X$
and a family  orbifold metrics $\check{g}(t)$ on
$\Sigma$.

Because 
$g$, $w$ and $dt$ are geometrically defined,  $\check{g}(t)$
 is an invariantly defined, $t$-dependent  orbifold K\"ahler metric
on  $\Sigma$ for all regular values of $t$; moreover, it is a 
smooth well-defined  tensor field  on all of $(\Sigma -\{ q_1, q_2\}) \times ({\mathbf a},{\mathbf b})$.  
Now notice that the K\"ahler quotient of $M$ associated with a regular value of the 
Hamiltonian is manifestly $(\Sigma , w\, \check{g}(t))$, and must therefore tend to 
the restriction of $g$ to $C_\pm$ as $t\to {\mathbf a}$ or ${\mathbf b}$. On the other hand,  $w^{-1}= g(\eta , \eta)$
by construction, and since $\eta$ is a Killing field of period $2\pi$ and Hamiltonian
$t$, we have $g(\eta, \eta) = 2|t-{\mathbf a}| + O(|t-{\mathbf a}|^2)$ 
near $t={\mathbf a}$, and similarly near 
$t={\mathbf b}$. 
Thus \cite{klp,ls}, letting  $\check{\omega} (t)$ be the K\"ahler form of $\check{g}(t)$,
 we  have 
 \begin{eqnarray*}
 \left.\check{\omega}\right|_{t={\mathbf a}} = \left.\check{\omega}\right|_{t={\mathbf b}}&=&0 \\ 
\left.\frac{d }{d t} \check{\omega} \right|_{t={\mathbf a}}  &=&
   \left.\hphantom{-}2\omega \right|_{C^-}  \\ 
\left.\frac{d }{d t} \check{\omega} \right|_{t={\mathbf b}}&=&
 \left. - 2\omega \right|_{C^+}~ .  
\end{eqnarray*}
 More surprisingly,  the calculations underlying the hyperbolic ansatz of 
 \cite{mcp2} show \cite[equation (3.16)]{ls} that  
 the  scalar curvature density of $g$ may be 
globally expressed on $Y\subset M$ as 
$$ s\vol = \left[2\check{\rho} - \frac{d^2}{dt^2} \check{\omega}\right]\wedge 
dt\wedge \theta$$
where $\check{\rho}(t)$ is the  Ricci form of $\check{g}(t)$. 
However, 
for regular values of $t\in ({\mathbf a},{\mathbf b})$, the Gauss-Bonnet formula for orbifolds 
tells us that 
\begin{eqnarray*}
\frac{1}{2\pi}\int_{\Sigma} \check{\rho}(t)&=& \chi (\Sigma ) -
\sum_{\jmath} \delta_\jmath(t)(1-\frac{1}{m_{\jmath}})\\
&=& \chi (S^2) - 2 + \sum_\jmath \frac{1}{m_{\jmath}}\delta_\jmath(t) \\
&=&  \sum_\jmath \frac{1}{m_{\jmath}}\delta_\jmath(t) 
\end{eqnarray*}
where  we have  introduced the characteristic function 
 $$\delta_{\jmath}(t)=\left\{ \begin{array}{ll}
1&t^{-}_{\jmath}<t<t^{+}_{\jmath}\\
0&\mbox{otherwise }\end{array}\right.$$
of $(t_\jmath^-,t_\jmath^+)$ in order 
to keep track of which two curves $E_\jmath$ meet a given regular level-set
 of the Hamiltonian function $t$.

Now the Futaki invariant is defined in terms of  the $L^2$ inner product of
the scalar curvature $s$ of $g$ with  normalized holomorphy potentials. 
It is therefore 
pertinent  to observe that 
\bea \int_{M} ts\vol &=& \int_Y ts\vol
\\&=& 
\int_Y t\left[2\check{\rho} - \frac{d^2}{dt^2} 
\check{\omega}\right]\wedge dt\wedge \theta
\\&=& 4\pi \int_{\mathbf a}^{\mathbf b}t\left[\int_{\Sigma}
\check{\rho}\right]~dt
-2\pi\int_{\Sigma}\left[\int_{\mathbf a}^{\mathbf b}t
\frac{d^2}{dt^2} \check{\omega}\right]~dt 
\\&=&4\pi \int_{\mathbf a}^{\mathbf b}2\pi\left[
 \sum_\jmath \frac{1}{m_{\jmath}}\delta_\jmath(t)
\right]t~dt
-2\pi \int_{\Sigma}\left( \left[t\frac{d}{dt} \check{\omega}
\right]^{\mathbf b}_{{\mathbf a}}-\int_{\mathbf a}^{\mathbf b}
\frac{d\check{\omega}}{dt} dt\right) 
\\&=&4\pi \sum_{\jmath}\frac{2\pi}{m_{\jmath}}\int_{t_\jmath^-}^{t_\jmath^+}
t~dt~
-2\pi \int_{\Sigma}\left( -2{\mathbf b}\omega\Big|_{t={\mathbf b}} -2{\mathbf a}\omega\Big|_{t={\mathbf a}} -
\left[\check{\omega}\right]_{\mathbf a}^{\mathbf b}\right) 
\\&=&4\pi \sum_{\jmath} 
\frac{2\pi({t_\jmath^+}-{t_\jmath^-})}{m_{\jmath}}~\frac{{t_\jmath^+}+{t_\jmath^-}}{2}~
+4\pi \Big( {\mathbf a}~ [\omega ] \cdot C^- + {\mathbf b} ~ [\omega ] \cdot C^+
\Big)
\\&=&4\pi \sum_{\jmath} \left([\omega ] \cdot E_\jmath\right)
\langle t \rangle_\jmath~
+4\pi \Big( {\mathbf a} ~[\omega ] \cdot C^- + {\mathbf b}~ [\omega ] \cdot C^+
\Big) 
\\&=&4\pi \sum_{\imath}
\langle t \rangle_\imath~{\zap A}_\imath = 2 \sum_{\imath}
\langle x \rangle_\imath~{\zap A}_\imath
\eea
where ${\zap A}_\imath = [\omega ] \cdot C_\imath$ is once again the area
of $C_\imath$. 
Since the holomorphy potential  of the holomorphic vector field
$\Xi$ is  $x=2\pi t$, we therefore have 
\begin{eqnarray}
-{\mathfrak F}(\Xi,[\omega]) &=& \int_M s(x-\bar{x}) d\mu \nonumber  \\
 &=&2\pi \int_M st~ d\mu - \bar{x}\int_M s ~d\mu  \nonumber\\
  &=& \Big( 4\pi \sum_{\imath}
\langle x \rangle_\imath~{\zap A}_\imath
\Big) - \bar{x}\Big(4\pi c_1\cdot [\omega ]\Big) \label{moe} 
\end{eqnarray}
where $\bar{x}$  again denotes the average value of  $x$ on $M$.

Next, notice that $\cup_\imath   C_\imath$ is the zero locus of the holomorphic section 
$\Xi_1 \wedge \Xi_2$ of the anti-conical line-bundle $K^{-1} = \wedge^2 T^{1,0}$,
and that, 
since the imaginary parts of $\Xi_1$ and $\Xi_2$ are Killing fields,
 this section is transverse to the zero section away from the intersection points
$C_\imath \cap C_\jmath$. 
 It follows that  the homology class of  $\cup_\imath   C_\imath$
is 
Poincar\'e dual to $c_1(M, J) = c_1 (K^{-1})$. 
Hence 
$$
c_1\cdot [\omega ] = \sum_\imath C_\imath \cdot [\omega ] =  \sum_\imath  {\zap A}_\imath
$$
so that  (\ref{moe})
simplifies to become
$${\mathfrak F}(\Xi,[\omega]) = -4\pi \sum_{\imath}
\Big(\langle x \rangle_\imath-\bar{x}\, \Big)~{\zap A}_\imath
$$
and   (\ref{norm}) therefore follows  by setting  $\Xi=\Xi_k$ and $x=x_k$. 
\end{proof}

With this preparation, we can  now calculate ${\mathcal A}(\Omega)$ for any toric surface. 

\begin{proofchiaro}
Relative to the basis given by the normalized holomorphy potentials 
$\{ x_k -\bar{x}_k ~|~ k=1,2\}$,
Proposition \ref{stage} tells us  that 
the restriction of the Futaki invariant to $\mathfrak t$  is given by
$$\vec{\mathfrak F}= ({\mathfrak F}_1, {\mathfrak F}_2) = -4\pi \, |\partial P| \, \vec{\mathfrak D}.$$
Since the $L^2$ inner product ${\mathbb B}_{\mathbf T}$ on $\mathfrak t$ 
 is given in this basis
by the moment-of-inertia matrix
$$\Pi =  \left[\int_P (x_j - \bar{x}_j) (x_k - \bar{x}_k) \, d{\zap a}\right] =  
\left[\int_M (x_j - \bar{x}_j) (x_k - \bar{x}_k) \, d\mu \right] ,$$
 the dual inner product ${\mathbb B}_{\mathbf T}^{-1}$ on ${\mathfrak t}^*$
 is represented  by the inverse matrix $\Pi^{-1}$, and Proposition \ref{bump} 
therefore tells us that 
$$\| \mathfrak F\|^2 = \vec{\mathfrak F} \cdot \Pi^{-1} \vec{\mathfrak F} = 
16 \pi^2 \, |\partial P|^2\, \vec{\mathfrak D} \cdot \Pi^{-1} \vec{\mathfrak D}.$$
Since  the first Chern class is Poincar\'e dual to the homology class of $\cup C_\imath$,  
$$c_1\cdot [\omega ] = \sum_\imath C_\imath\cdot [\omega ] = 
\sum_\imath {\zap A}_\imath  = |\partial P|,$$
while  $M$ has volume $|P|= [\omega ]^2/2$. Thus 
$${\mathcal A}(\Omega ) =  \frac{(c_1\cdot [\omega ])^2}{[\omega ]^2} + 
\frac{1}{32\pi^2} \| \mathfrak F\|^2 = \frac{|\partial P|^2}{2}\left(  \frac{1}{|P|} + 
 \vec{\mathfrak D} \cdot \Pi^{-1} \vec{\mathfrak D}\right)$$
exactly as  claimed. 
\end{proofchiaro}

By (\ref{better}), 
Theorem \ref{behold}  is now an  immediately immediate corollary.

\section{The Abreu Formalism}
\label{abreu}

The proof of Theorem \ref{behold} given in \S \ref{focus} was based on the author's 
 formula \cite{mcp2} for the scalar curvature
of K\"ahler surfaces with isometric $S^1$ actions. This section will present 
a different proof,  which is based on Abreu's beautiful formula \cite{abreueq}
for the scalar curvature
of a toric manifold, and makes crucial use of  an integration-by-parts trick 
 due to 
 Donaldson \cite{dontor}.  While this second proof is certainly more elegant and 
  natural, 
   there are unfortunately many numerical factors involved
in this formalism that are typically  misreported in the literature, and we will need 
to correct these imprecisions in order 
to obtain our result.  This will be well worth the effort, however, insofar as this second proof 
  works equally well in all complex dimensions.
The reader should note, however,  that the  higher-dimensional version of
Theorem \ref{behold} 
is of  much less differential-geometric interest  than the corresponding statement
in  complex dimension $2$;
 it is only in real  dimension $4$ that the Calabi energy is intimately tied to 
the Weyl functional and conformally Einstein metrics. 

We thus begin by considering a toric manifold $(M^{2m},J,\mathbf{T})$
of complex dimension $m$, equipped with a K\"ahler metric $g$ which is
invariant under the action of the $m$-torus $\mathbf{T}\cong T^m$. Choosing
an isomorphism 
$\mathbf{T} \cong  \RR^m/\ZZ^m$, we then let 
$(\xi_1, \ldots , \xi_m)$ be the $m$ unit-period  vector fields  
generating $\mathbf{T}$  associated with this choice, 
 and let $(\Xi_1, \ldots , \Xi_m)$ be the holomorphic vector fields 
defined by $\Xi_j = \xi_j^{1,0}$. Let 
$(x_1, \ldots , x_m)$
be Hamiltonians for $(\xi_1, \ldots , \xi_m)$, and note that these  are consequently also 
 holomorphy potentials for 
$(\Xi_1, \ldots , \Xi_m)$. 
The function $\vec{x}: M\to \RR^m$ given by  $(x_1, \ldots , x_m)$ is then a
 {\em moment map} for this $T^m$-action, and its image $\vec{x}(M)$ is 
 called the associated {\em moment polytope}. 
 Once again, the moment  polytope  has the 
  {\em Delzant property}:   a neighborhood of any vertex  $\in P$ 
can be 
transformed into a neighborhood of  $\vec{0}\in [0,\infty )^m$ 
by an element of $\mathbf{SL}(m, \ZZ) \ltimes \RR^m$. The $2m$-dimensional 
volume measure on $M$ now pushes forward, by integration on the fibers, to the standard
$m$-dimensional Euclidean measure on $\RR^m$, which we will again 
denote by $d{\zap a}$  to emphasize our special interest in the case of $m=2$. The boundary
$\partial P$ is the image of a union of toric complex hypersurfaces in $M$, and the 
push-forward of $(2m-2)$-dimensional Riemannian measure induces an $(m-1)$-dimensional measure $d\lambda$ 
on $\partial P$ which, on each face, is $\mathbf{SL}(n, \ZZ)$-equivalent to the standard $(m-1)$-dimensional 
Euclidean measure on the hyper-plane $x_1=0$.

For consistency with \cite{abreueq,dontor}, it will  be convenient to also consider
the vector fields $\eta_j = \xi_j /2\pi$ of period $2\pi$, and their Hamiltonians 
$t^j= x_j /2\pi$; the corresponding moment map is then $\vec{t} = (t^1, \ldots , t^m )$, 
and its image $\tilde{P}= \vec{t}(M)$ can then be transformed into $P$ by dilating
by a factor of $2\pi$. Following Donaldson, we will  use  $d\umu$ 
  to denote $m$-dimensional Euclidean measure
on $\tilde{P}$, and $d\sigma$ to denote the $(m-1)$-dimensional measure
on $\partial \tilde{P}$ which, on each face, is $\mathbf{SL}(n, \ZZ)$-equivalent to
$(m-1)$-dimensional Euclidean measure on the hyperplane $t^1=0$. Identifying 
$\tilde{P}$ with $P$ via the obvious homothety, we thus have $d{\zap a} = (2\pi)^m d\umu$
and $d\lambda = (2\pi)^{m-1} d\sigma$. 

On the open dense set $\vec{t}^{-1}(\Int \tilde{P}) \subset M$, 
Abreu observed that our  $T^m$-invariant K\"ahler metric 
can be expressed as 
$$g = V_{,jk} dt^j \otimes dt^k + V^{,jk}d\vartheta_j \otimes d\vartheta_k$$
where $V: \tilde{P} \to \RR$ is  a convex potential function, 
 $[V_{,jk}]$ is the Hessian matrix of $V$, $[V^{,jk}]$ is its inverse matrix,
and the $\vartheta_j$ are standard angle coordinates on $T^m = S^1 \times \cdots \times S^1$. 
The potential $V$ is Legendre dual to a K\"ahler potential for $g$; it is 
continuous on $\tilde{P}$ and smooth in its interior.  Moreover, 
it satisfies the so-called Guillemin-Abreu boundary condition:  near a face given by 
$L=0$, where the affine linear function $L: \RR^m\to \RR$ is non-negative on $\tilde{P}$ and where
$dL$ is 
 an indivisible
  element of the integer lattice $(\ZZ^m)^*$,  $V$ differs from $\frac{1}{2} L \log L$ by a
smooth function. (Note that the factor of $1/2$ is missing from \cite[p. 303]{dontor}, 
and will lead to a compensating correction below.) 
The scalar curvature $s$ of $g$ is then expressible in terms of $V$ via 
Abreu's beautiful  formula \cite{abreueq,dontor}
\begin{equation}\label{abra}
s = - (V^{,jk})_{,jk}: = -\sum_{j,k=1}^m \frac{\partial^2V^{,jk}}{\partial t_j \partial t_k} ,
\end{equation}
where we have followed 
 Donaldson's conventions  in order to   give $s$ 
its standard Riemannian value. 
  
  In this setting, Donaldson \cite[Lemma 3.3.5]{dontor} derives the integration-by-parts formula
  \begin{equation}
\label{cadabra}
 \int_{\tilde{P}}V^{,jk}f_{,jk} ~d\umu =  \int_{\tilde{P}}(V^{,jk})_{,jk}f ~d\umu + 2 \int_{\partial \tilde{P}} f ~d\sigma
\end{equation}
for any convex function $f$. Note, however, that the factor of $2$ in front of
the boundary term  does not actually  appear in \cite{dontor}, but  is needed to compensate
for the factor of $1/2$ in the corrected  Abreu-Guillemin boundary conditions. We also give 
 the boundary term a different sign, because 
we are treating $d\sigma$ as a measure rather than as an exterior differential form.

\begin{xpl}
Let $(M,g)$ be the unit $2$-sphere, with sectional curvature $K=1$, and hence with 
scalar curvature $s=2K=2$. Equip $(M,g)$ with the $S^1$ action given by 
period-$2\pi$ rotation around the the $z$-axis, with Hamiltonian $t=z$ and
moment polytope $\tilde{P}= [-1,1]$. In cylindrical coordinates,  
our metric  becomes 
$$
g= \frac{dt^2}{1-t^2} + (1-t^2) d\vartheta^2 
$$
so that the potential $V$ must satisfy $V_{,11}= 1/(1-t^2)$ and $V^{,11}= 1-t^2$. 
A suitable choice of $V$ is therefore 
$$V= \frac{1}{2} (1+t) \log (1+t) + \frac{1}{2} (1-t) \log (1-t)$$
and we note that this satisfies the  Guillemin-Abreu 
boundary conditions  discussed above. 
The Abreu formula \eqref{abra} now correctly calculates the scalar curvature
$$
s= -(V^{,11})_{,11} = -\frac{d^2}{dt^2} (1-t^2) =2 
$$
of $g$.  Also notice that integration by parts gives
$$
\int_{-1}^1 (1-t^2) f^{\prime\prime} dt = \int_{-1}^1 (1-t^2)^{\prime\prime} f dt + 2[f(-1)+f(1)]
$$
as predicted by   \eqref{cadabra}. 
\end{xpl}

\begin{xpl}
Let $(M^{2m},g)$ be the Riemannian product $S^2 \times \cdots \times S^2$ of $m$ copies
of the unit $2$-sphere, with equipped with the product $T^m$-action. The 
moment polytope is now the $m$-cube $\tilde{P} = [-1,1]^m$, and 
the metric is again represented by a symplectic 
potential 
$$
V= \frac{1}{2} \sum_j  \left[(1+t^j) \log (1+t^j) +  (1-t^j) \log (1-t^j)\right]
$$
which  satisfies our corrected Guillemin-Abreu boundary conditions. 
The Abreu formula \eqref{abra}  now predicts that the scalar curvature of $g$ is 
$$
s= -(V^{,ij})_{,ij} = -\sum_j \frac{\partial^2}{\partial(t^j)^2} (1-t_j^2) =2m~,
$$
in agreement with the additivity of the scalar curvature under Riemannian products. 
Integrating the $j^{\rm th}$  term  by parts twice  in the $j^{\rm th}$ variable, we  have 
$$
\int_{\tilde{P}}
\left[\sum_j [1-(t^j)^2] \frac{\partial^2f}{\partial (t^j)^2} \right] d\umu  = \int_{\tilde{P}}
\left[\sum_j   \frac{\partial^2[1-(t^j)^2] }{\partial (t^j)^2} \right] f~d\umu + 2 \int_{\partial \tilde{P}}f~d\sigma ~,
$$
for any smooth $f$, thereby double-checking    \eqref{cadabra} in  complex dimension $m$. 
\end{xpl}

 By linearity, \eqref{cadabra} also holds  \cite[Corollary 3.3.10]{dontor}
if $f$ is any difference of convex functions. In particular, 
 \eqref{cadabra}  applies to 
any affine linear 
function $f$ on  $\RR^m$; and since any such $f$ satisfies $f_{,jk}=0$, 
\eqref{abra} and \eqref{cadabra} tell us that 
$$
0= \int_{\tilde{P}}(-s) f ~d\umu + 2 \int_{\partial \tilde{P}} f ~d\sigma
$$
for any affine-linear function. Applying the dilation that relates $\tilde{P}$ and $P$, 
we therefore obtain
\begin{equation}
\label{kazam}
  \int_P  sf ~d{\zap a} = 4\pi   \int_{\partial P} f ~d\lambda 
\end{equation}
for any affine-linear  $f$. In particular, if we take $f=x_k - \bar{x}_k$,
we obtain 
$$ \int_P  x_k  (s-\bar{s}) ~d{\zap a}= \int_P  (x_k - \bar{x}_k) s ~d{\zap a} =
4\pi  \int_{\partial P} (x_k - \bar{x}_k)  ~d\lambda$$
which in turn implies  that 
 $$ \int_M x_k  (s-\bar{s}) ~d\mu =4\pi  \int_{\partial P} (x_k - \bar{x}_k)  d\lambda $$
 because $d{\zap a}$ is the push-forward of the volume measure of $(M,g)$. However, 
 $x_k$ is a holomorphy potential  for the holomorphic vector field $\Xi_k$, so  
   \eqref{foot} tells us that the component  
      $$ \mathfrak{F}_k  := \mathfrak{F} (\Xi_k , \Omega)  $$
      of the Futaki invariant 
 is given by 
   $$ \mathfrak{F}_k = -4\pi  \int_{\partial P} (x_k - \bar{x}_k)  
 ~d\lambda ~. $$
On the other hand, 
$$\frac{1}{|\partial P|}\int_{\partial P} (x_k - \bar{x}_k)  d\lambda = \langle x_k - \bar{x}_k \rangle 
= \langle x_k \rangle - \bar{x}_k = \mathfrak{D}_k $$
where  $|\partial P|$ denotes the $\lambda$-measure of the boundary, 
$\langle ~\rangle$ is the average with respect to $d\lambda$, and where 
$\mathfrak{D}_k$  is the $k^{\rm th}$ component of the 
vector $\mathfrak{D}$ which  points from  the barycenter of $P$ to the 
barycenter of $\partial P$. 
Thus the Futaki invariant ${\mathfrak{F}}(\Omega ) =\vec{\mathfrak F}= ({\mathfrak F}_1 , \ldots , {\mathfrak F}_m)$
is given by 
\begin{equation}
\label{pointer}
\vec{\mathfrak F}  = -4\pi~ |\partial P| ~\vec{\mathfrak{D}}
\end{equation}
and we have thus reproved Proposition \ref{stage}  in arbitrary complex dimension 
$m$.

Now notice that, by taking normalized Hamiltonians,  
the Lie algebra $\mathfrak{t}$ of our maximal torus $\mathbf{T}$ 
is naturally  identified with those affine-linear functions $\RR^m\to \RR$
which send the barycenter $\bar{\vec{x}}$ of our moment polytope to 
$0$. From this view-point,  it is now 
apparent that  ${\mathfrak{F}}(\Omega ) = -4\pi |\partial P|~\vec{\mathfrak{D}}$ 
actually belongs to $\mathfrak{t}^*$, as it should. In these same terms, though, the  
``moment-of inertia'' matrix
$\Pi$ defined by 
\begin{equation}
\label{inertia}
\Pi_{jk} = \int_P (x_j - \bar{x}_j) (x_k - \bar{x}_k)~d{\zap a} 
\end{equation}
 represents the $L^2$ inner product 
$$
\mathbb{B}_{\mathbf{T}}: \mathfrak {t} \times \mathfrak {t}  \to \RR ~,
$$
while its inverse  matrix $\Pi^{-1}$  represents the dual inner product
$$
\mathbb{B}_{\mathbf{T}}^{-1}: \mathfrak {t}^* \times \mathfrak {t}^*  \to \RR~.
$$
By Corollary \ref{pump} and \eqref{pointer}, we thus have 
 $$\| {\mathfrak F}(\Omega )\|^2 = {\mathbb B}_{\mathbf T}^{-1}\Big(   {\mathfrak F}(\Omega ) \, ,  \, {\mathfrak F}(\Omega )\Big)= 16\pi^2 |\partial P|^2 ~\vec{\mathfrak{D}} \cdot \Pi^{-1} \vec{\mathfrak{D}} ~. $$
Chen's inequality  \eqref{shine} therefore tells us\footnote{Here it is worth  reiterating  that,  
while the inequality \eqref{shine}
is  essentially elementary when $g$ is $\mathbf{T}$-invariant, it is a  deep  and remarkable
 result  that 
this same inequality  in fact  holds 
for  completely arbitrary K\"ahler metrics.}  that any K\"ahler metric on a toric manifold
satisfies 
$$
\int_M (s-\bar{s})^2 d\mu \geq {16\pi^2} |\partial P|^2 
~\vec{\mathfrak{D}} \cdot \Pi^{-1} \vec{\mathfrak{D}}
$$
where the moment polytope $P$ is determined solely by the toric manifold $M$ 
and the K\"ahler class 
$\Omega$; moreover, equality holds iff $g$ is extremal. 

On the other hand, setting $f=1$ in \eqref{kazam} yields 
$$\int_P s ~d{\zap a} = 4\pi \int_{\partial P}  d\lambda ~,$$ 
so that 
$$\int_M s~d\mu = 4\pi ~|\partial P |~,$$
a fact which the reader may enjoy comparing with \eqref{ints}. 
Since $(M,g)$ has volume $|P|$, we therefore see that 
$$
\int_M \bar{s}^2 ~d\mu = \frac{\left(\int_M s~d\mu \right)^2}{\int_Md\mu}= 16\pi^2 \frac{|\partial P|^2}{|P|}
$$
and the Pythagorean theorem \eqref{python} therefore implies the following result:
\begin{main}
\label{behind}
 Let $(M^{2m}, J, \Omega,\mathbf{T})$ be a toric complex $m$-manifold with fixed K\"ahler class, and let
 $P\subset \RR^m$ be the associated moment polytope. Then the scalar curvature $s$  of 
 any K\"ahler metric $g$ with K\"ahler form $\omega \in \Omega$  
satisfies 
\begin{equation}
\label{dejavu}
\frac{1}{16\pi^2}\int_M s^2 d\mu_g \geq 
|\partial P|^2 \Big( \frac{1}{|P|~} + \vec{\mathfrak D}\cdot  {\Pi}^{-1} \vec{\mathfrak D}
\Big)~,
\end{equation}
with 
equality iff $g$ is an extremal K\"ahler metric. Here $|P|$ denotes the $m$-volume of 
the interior of  $P$, $|\partial P|$ is the $\lambda$-volume of its boundary,  
 the  moment-of-inertia
matrix  $\Pi$ of $P$ is defined by \eqref{inertia}, and $\vec{\mathfrak D}$ is the vector joining  the barycenter
$P$ to  the barycenter of $\partial P$. 
\end{main}

Specializing to the case of $m=2$ gives a second proof of Theorem \ref{behold}.

Notice that the sharp lower bound 
\eqref{dejavu} is in fact independent of dimension. However, this feature of the result 
actually depends
on our conventions regarding the moment polytope and the generators of the action. 
For example, if we had instead chosen the periodicity of our generators
to be $2\pi$ instead of $1$, we would have been led to instead use the   polytope $\tilde{P}$, 
and we would have then been forced to introduce 
an inconvenient
scaling factor, since  
$$
\frac{|\partial {P}|^2}{|{P}|} = (2\pi)^{m-2}\frac{|\partial \tilde{P}|^2}{|\tilde{P}|} 
$$
But it is also worth noticing that 
this awkward scaling factor magically disappears when $m=2$. This reflects
the fact that the Calabi energy is invariant under rescaling in  real dimension four, 
and that  rescaling a K\"ahler class exactly results in a rescaling of the associated 
moment polytope.

In particular, for the purpose of calculating the virtual action $\mathcal{A}$ 
for toric surfaces, we would have 
obtained exactly the same formula if we had used  the rescaled polygon
 $\tilde{P}$ instead of the polygon  $P$  emphasized by this article. 
Nonetheless, the use of $P$ has other practical advantages, even when $m=2$.
For example,  the $\lambda$-length  of sides of $P$ directly represents the areas of holomorphic
curves in $M$, unmediated by factors of $2\pi$. In practice,
this avoids repeatedly having to  cancel powers
of $2\pi$ when calculating $\mathcal{A}(\Omega)$ in  explicit examples. 
This  will   now become apparent, as we next 
 illustrate  Theorem \ref{behold} by applying it to specific   
 toric surfaces. 

\section{Hirzebruch Surfaces} 

As a simple illustration of  Theorem \ref{chiaro}, we now compute ${\mathcal A}(\Omega)$
for the {Hirzebruch surfaces}. Recall \cite{bpv,gh} that, for any non-negative integer $k$, 
 the $k^{\rm th}$ Hirzebruch surface $\mathbb{F}_k$ 
is defined to be the $\CP_1$-bundle ${\mathbb P}({\mathcal O}(k)\oplus {\mathcal O})$
over $\CP_1$; that is, it is the complex surface  obtained from   line bundle
${\mathcal O}(k)\to \CP_1$ of  Chern class $k$
by adding a section at infinity.  
Calabi \cite{calabix} explicitly constructed an extremal  every K\"ahler metric in 
every K\"ahler class on each $\mathbb{F}_k$;  his direct assault on the problem proved
 feasible because 
the maximal compact subgroup  ${\mathbf U}(2)/\ZZ_k$ of the automorphism group has orbits 
of real codimension 1, thereby reducing the relevant equation for the K\"ahler potential to an
 ODE. Because their automorphism groups all  contain finite quotients of  ${\mathbf U}(2)$,
the Hirzebruch surfaces all admit actions of the $2$-torus  
$T^2$, and so are toric surfaces.  
Normalizing the fibers of  $\mathbb{F}_k\to \CP_1$ to have area 1, the associated
moment polygon  becomes the trapezoid 
\begin{center}
\mbox{
\beginpicture
\setplotarea x from 0 to 180, y from 0 to 150 
\arrow <2pt> [1,2] from 0 20  to 250 20
\arrow <2pt> [1,2] from 20 0 to 20 120 
\put {$x_1$} [B1] at 270 14  
\put {$x_2$} [B1] at  20 130  
\put {$1$} [B1] at 10 40  
\put {$\alpha$} [B1] at 45 75 
\put {$\alpha+k$} [B1] at 110 8  
\hshade 20 20  220 70  20 70    /  
{\setlinear
\plot 20 20 20 70 /
\plot 20 20  220 20 / 
\plot 20 70 70 70 /
\plot 70 70 220 20 / 
}
\endpicture
}
\end{center}
and since ${\mathcal A}(\Omega)$ is unchanged by  multiplying $\Omega$ by a positive constant, 
we may impose this normalization without loss of generality.

We will now apply  Theorem \ref{chiaro} to calculate the Calabi energy of Calabi's extremal 
K\"ahler metrics; 
 since Hwang and Simanca \cite{hwasim} have previously computed  this quantity  by  
other means,  this exercise will, among other things, 
 provide us with another useful double-check of equation (\ref{sounder}). 
The area and $\lambda$-perimeter of the polygon are easily seen to be 
$$|P|= \alpha + \frac{k}{2}, ~~|\partial P| = 2+2\alpha + k$$
and it is not difficult to calculate the barycenter of the interior 
$$\bar{\vec{x}}= \frac{\left(3\alpha^2 + 3k \alpha + k^2,  3\alpha + k\right)
}{6|P|}$$
or boundary 
$$   
\langle \vec{x}\rangle =\frac{ \left(
\alpha^2+  \alpha (k+1) + \frac{1}{2}k(k +1)  , 
\alpha+1
\right)}{|\partial P |} 
$$
by hand. The vector 
$$
\vec{\mathfrak D}=
{\textstyle \frac{k(2\alpha+k-1)}{12|\partial P | |P|} }
 \Big( k ,  -2 \Big)
$$
thus joins these two barycenter, and without too much work
one can also check  that the ``moment-of inertia'' matrix of $P$ is given by 
$$
\Pi = \frac{1}{72 |P|}\left[\begin{array}{cc} 
\textstyle {6 \alpha^4 + 
12 \alpha^3k  +12\alpha^2k^2 + 6 \alpha k^3 + k^4 }
 &  -\frac{k}{2}(6\alpha^2+6\alpha k +k^2) 
 \\     -\frac{k}{2}(6\alpha^2+6\alpha k +k^2) & 
 6 \alpha^2 +6 \alpha k  + k^2
\end{array}\right]
$$
The Futaki contribution to $\mathcal A$  is therefore encoded  by the expression 
$$
\vec{\mathfrak D}\cdot \Pi^{-1}\vec{\mathfrak D}= \frac{2k^2(2\alpha+k-1)^2}{|P||\partial P|^2(6\alpha^2+6\alpha k+ k^2)}
$$
and the virtual action is thus given by 
\begin{equation}
\label{hirzen}
{\mathcal A}(\Omega) =  \frac{2\alpha^3+(4+3k)\alpha^2+2(1+k)^2\alpha+k(k^2+2)/2}{\alpha^2+\alpha k+ k^2/6}~.
\end{equation}
After
multiplication by an overall constant and 
 the
change of variables $k=n$,  $\alpha= (a-n)/2$, 
this agrees with 
 with the expression 
Hwang and Simanca \cite[equation (3.2)]{hwasim} obtained for   their ``potential energy'' 
via a  different method.

For $k> 0$, the function ${\mathcal A}(\alpha)$ on the right-hand side of
(\ref{hirzen}) extends  smoothly 
across $\alpha=0$, and  satisfies 
$$\left. \frac{d{\mathcal A}}{d\alpha} \right|_{\alpha =0} = -6\frac{(k-2)^2}{k}$$
so ${\mathcal A}(\alpha)$ is  a decreasing function for small $\alpha$ if $k\neq 2$.
On the other hand, ${\mathcal A}(\alpha) \sim 2\alpha$ for $\alpha \gg 0$, so ${\mathcal A}$ is increasing
for large $\alpha$. It follows that ${\mathcal A}(\alpha)$ has a minimum somewhere on $\RR^+$
for any $k\neq 2$. Since Calabi's construction \cite{calabix} moreover shows that each 
K\"ahler class on a Hirzebruch surface is represented by an extremal K\"ahler metric, 
Proposition \ref{bfk} tells us that, for $k\neq 2$,  
the Calabi metric $g_k$ 
corresponding to the minimizing value of $\alpha$  is necessarily Bach-flat.

On the other hand,  since 
$$
{\mathcal A}(\Omega) -\frac{3}{4}k = \frac{48\alpha^3+(54k+96)\alpha^2+(30k^2+96k+48)\alpha + 9k^3+24k }{4(6\alpha^2+6k\alpha + k^2)}
$$
is positive for all $\alpha > 0$,  it follows that 
$$\min_\Omega {\mathcal A}(\Omega) > \frac{3}{4}k, $$
and we conclude that the corresponding Bach-flat K\"ahler metric $g_k$ has 
$${\mathcal W}(g_k) > 2\pi^2k
$$
Since the  Hirzebruch surface $\mathbb{F}_k$ is diffeomorphic to
$S^2\times S^2$ when $k$ is even, and  is diffeomorphic to 
$\CP_2\#\overline{\CP}_2$ when $k$ is odd,    the metrics $g_k$, first discovered by 
Hwang and Simanca  \cite{hwasim}, immediately give us the following: 
 
\begin{prop}
The smooth $4$-manifolds $S^2 \times S^2$ and $\CP_2\#\overline{\CP}_2$
both admit sequences of Bach-flat conformal classes $[g_{k_j}]$ with  ${\mathcal W}([g_{k_j}])\to +\infty$. Consequently, the moduli space of Bach-flat conformal metrics on either of 
these manifolds has infinitely many connected components. 
\end{prop}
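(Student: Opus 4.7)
The plan is to use the sequence of Bach-flat K\"ahler metrics $g_k$ just constructed. For each integer $k\neq 2$, the virtual action ${\mathcal A}(\alpha)$ on the Hirzebruch surface $\mathbf{F}_k$ attains its minimum at some interior point $\alpha_k>0$, and by Proposition \ref{bfk} the corresponding extremal (Calabi) metric $g_k$ is Bach-flat, with the explicit lower bound ${\mathcal W}(g_k) > 2\pi^2 k$. Since $\mathbf{F}_k$ is diffeomorphic to $S^2\times S^2$ when $k$ is even and to $\CP_2\#\overline{\CP}_2$ when $k$ is odd, passing to the even-$k$ (respectively, odd-$k$) subsequence with $k\neq 2$ produces sequences of Bach-flat conformal classes $[g_{k_j}]$ on each of the two smooth manifolds along which the conformally invariant Weyl energies diverge to $+\infty$.

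To finish, I must show that the $[g_{k_j}]$ lie in infinitely many distinct connected components of the moduli space $\mathscr{B}$ of Bach-flat conformal classes on the underlying $4$-manifold. My plan is to argue that ${\mathcal W}$ is constant on each connected component of $\mathscr{B}$. Since the Bach tensor is (up to a universal constant) the $L^2$-gradient of the Weyl functional on the space of conformal classes, for any smooth arc $[g_t]$ lying inside $\mathscr{B}$ one has
\[
\frac{d}{dt}{\mathcal W}([g_t]) \; = \; \int_M \langle B(g_t),\dot{g}_t\rangle\, d\mu_{g_t} \; = \; 0,
\]
so ${\mathcal W}$ is locally constant along smooth paths in $\mathscr{B}$. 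Because the values ${\mathcal W}([g_{k_j}])$ are unbounded and hence take infinitely many distinct values, they must be realized in infinitely many distinct components.

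The main subtlety I anticipate is the passage from ``locally constant along smooth arcs'' to ``constant on connected components'' when the moduli space is not a priori a smooth manifold. With $\mathscr{B}$ endowed with its natural $C^{k,\alpha}$ topology, I would handle this either by working with path-components (the natural notion of component for moduli problems, on which the variational argument applies verbatim), or by invoking the real-analyticity of ${\mathcal W}$ together with a Lojasiewicz-type local structure theorem to secure local path-connectedness of $\mathscr{B}$. Either route suffices for the stated conclusion, and no input beyond the calculations already performed is required.
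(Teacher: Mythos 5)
Your argument is correct and is essentially the one the paper intends: the paper simply cites the diffeomorphism type of $\mathbf{F}_k$ by parity and the bound ${\mathcal W}(g_k) > 2\pi^2 k$, leaving the "locally constant on components" step implicit, which you have filled in via the standard observation that ${\mathcal W}$ has vanishing first variation along any path of critical points. Your extra care about path-components versus connected components is a reasonable refinement of a point the paper does not address.
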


The metric $g_1$ on $\mathbb{F}_1$ has scalar curvature $s > 0$ everywhere, 
and its conformal rescaling $s^{-2}g_1$ was shown by Derdzi{\'n}ski \cite{derd} to coincide with 
the Einstein metric on $\CP_2\#\overline{\CP}_2$ discovered by Page 
\cite{page}. For $k \geq 3$, the scalar curvature $s$ of $g_k$ instead vanishes along  a
hypersurface, which becomes the conformal infinity for the Einstein metric
$s^{-2}g_k$; thus $\mathbb{F}_k$ is obtained from two Poincar\'e-Einstein
manifolds, glued along their conformal infinity. These two Einstein metrics
are in fact isometric, in an orientation-reversing manner. Because of their ${\mathbf U}(2)$
symmetry, these Einstein metrics 
belong to the
family first discovered by B\'erard-Bergery \cite{beber3}, and later  rediscovered by physicists, who call them  AdS-Taub-bolt metrics \cite{hawknut}.

\section{The Two-Point Blow-Up of $\CP_2$}

As a final illustration of Theorem \ref{chiaro}, we now compute the virtual action
for K\"ahler classes on the blow-up of  $\CP_2$ at two distinct points. 
The
present author has done this elsewhere by a more complicated method, 
and the details of the answer played an important role in showing \cite{chenlebweb,lebhem10} that 
this manifold admits an Einstein metric, obtained by conformally rescaling
a Bach-flat K\"ahler metric. Thus, repeating the computation by means 
of equation (\ref{sounder}) provides yet another double-check of Theorem \ref{behold}. 

Blowing up $\CP_2$ in two distinct points results in exactly the same complex surface 
as blowing  $\CP_1\times \CP_1$ in a single point  \cite{bpv,gh}. The latter picture is actually 
useful in choosing a  pair of generators for the torus action which makes the needed
computations as simple as possible. The resulting moment polygon $P$ then takes the form
\begin{center}
\mbox{
\beginpicture
\setplotarea x from 0 to 200, y from 0 to 150 
\arrow <2pt> [1,2] from 0 20  to 150 20
\arrow <2pt> [1,2] from 20 0 to 20 120 
\put {$x_1$} [B1] at 160 14  
\put {$x_2$} [B1] at  20 130  
\put {$\beta$} [B1] at 12 70  
\put {$1$} [B1] at 12 30  
\put {$1$} [B1] at 34 8  
\put {$\alpha$} [B1] at 70 8 
\put {$\alpha+1$} [B1] at 60 105  
\put {$\beta+1$} [B1] at 117 60  
\hshade 20 50  100 50  20 100  100 20 100  /  
{\setlinear
\plot 20 50 20 100 /
\plot 50 20  100  20 / 
\plot 20 50  50 20 /
\plot 20 100 100 100 /
\plot 100 100 100 20 / 
}
\endpicture
}
\end{center}
after rescaling to give the blow-up divisor area $1$. It is then easy to see that
the area of the polygon and the $\lambda$-length of its boundary are given by
$$
|P| = \frac{1}{2} + \alpha + \beta + \alpha\beta , \quad  
|\partial P| = 3 + 2 \alpha + 2 \beta
$$
while the barycenter of the boundary 
$$
\langle \vec{x}\rangle = \frac{\Big(
(1 + \alpha) (2 + \alpha + \beta) , \quad(1 + \beta) (2 + \alpha + \beta) \Big)}{|\partial P|}
$$
and of the interior 
$$
\bar{\vec{x}}= 
\frac{
\Big( 
3(1+\alpha)^2(1+\beta)-1 ,  \quad 3(1+\alpha)(1+\beta)^2-1
 \Big) }{6\, |P|}
$$
are not difficult to compute by hand. The vector joining these two barycenters  is thus given by 
$$
\vec{\mathfrak D}= 
\frac{\Big(
-\alpha + 2 \beta + 3 \alpha \beta + 3 \alpha^2 \beta, \quad - \beta +2 \alpha + 3 \alpha \beta + 3 \alpha \beta^2
\Big) }{6 |P|~|\partial P | }
$$
 and  the moment-of-inertia matrix 
 $$
 \Pi = \frac{1}{24}
 \left[\begin{array}{cc}8(1+\alpha)^3(1+\beta)-2 & 6(1+\alpha)^2(1+\beta)^2-1 \\6(1+\alpha)^2(1+\beta)^2-1 & 8(1+\alpha)(1+\beta)^3-2\end{array}\right] -\, |P| \,\left[\begin{array}{cc}\bar{x}_1^2 & \bar{x}_1\bar{x}_2 \\ \bar{x}_1\bar{x}_2 & \bar{x}_1^2\end{array}\right]
$$
are also easily obtained without the use of a computer.  According to  (\ref{sounder}), 
 ${\mathcal A}(\Omega )$ is therefore given by 
 
 \bigskip 
 
\noindent {\small
$\displaystyle  
3 \Big[ 3 + 28 \beta + 96 \beta^2 + 168 \beta^3 + 164 \beta^4 + 80 \beta^5 + 16 \beta^6 + 
     16 \alpha^6 (1 + \beta)^4 + 
     16 \alpha^5 (5 + 24 \beta + 43 \beta^2 + 37 \beta^3 + 15 \beta^4 + 2 \beta^5) + 
     4 \alpha^4 (41 + 228 \beta + 478 \beta^2 + 496 \beta^3 + 263 \beta^4 + 60 \beta^5 + 
        4 \beta^6) + 
     8 \alpha^3 (21 + 135 \beta + 326 \beta^2 + 392 \beta^3 + 248 \beta^4 + 74 \beta^5 + 
        8 \beta^6) + 
     4 \alpha (7 + 58 \beta + 176 \beta^2 + 270 \beta^3 + 228 \beta^4 + 96 \beta^5 + 16 \beta^6) + 
     4 \alpha^2 (24 + 176 \beta + 479 \beta^2 + 652 \beta^3 + 478 \beta^4 + 172 \beta^5 + 
        24 \beta^6)\Big]\Big/\linebreak 
        \Big[1 + 10 \beta + 36 \beta^2 + 64 \beta^3 + 60 \beta^4 + 24 \beta^5 + 
   24 \alpha^5 (1 + \beta)^5 + 12 \alpha^4 (1 + \beta)^2 (5 + 20 \beta + 23 \beta^2 + 10 \beta^3) + 
   16 \alpha^3 (4 + 28 \beta + 72 \beta^2 + 90 \beta^3 + 57 \beta^4 + 15 \beta^5) + 
   12 \alpha^2 (3 + 24 \beta + 69 \beta^2 + 96 \beta^3 + 68 \beta^4 + 20 \beta^5) + 
   2 \alpha (5 + 45 \beta + 144 \beta^2 + 224 \beta^3 + 180 \beta^4 + 60 \beta^5)\Big]$}

\bigskip 

\noindent
as is most easily checked at this point using {\em Mathematica} or a similar program. After the substitution 
$\gamma=\alpha$, 
this agrees exactly with the answer obtained in \cite[\S 2]{lebuniq}, where this explicit 
formula 
plays a key role in classifying compact Einstein $4$-manifolds for which the 
metric is Hermitian with respect to some complex structure.

When $\alpha=\beta$, the above expression simplifies  to become 

$$\frac{9 + 96 \alpha + 396 \alpha^2 + 840 \alpha^3 + 954 \alpha^4 + 528 \alpha^5 + 96 \alpha^6}{1 + 
 12 \alpha + 54 \alpha^2 + 120 \alpha^3 + 138 \alpha^4 + 72 \alpha^5 + 12 \alpha^6}$$
which, after dividing by $3$ and making the substitution $\alpha=1/y$, coincides with
the expression \cite{lebhem} first used to show that $\mathcal A$ has a critical point, and 
later used again  \cite{chenlebweb} to prove the existence of a 
conformally Einstein, K\"ahler metric on $\CP_2\# 2\overline{\CP}_2$. For a second, 
conceptually simpler
  proof of this last fact, see \cite{lebhem10}. 

%\pagebreak 

\appendix

\section{Restricting the Futaki Invariant}
\label{conceptual} 

 In this appendix, we will  prove Proposition \ref{sauce}. The key ingredient  used in the proof is 
 the following result of  Nakagawa \cite{nakagawa1}:
 
 \begin{prop}[Nakagawa] \label{naka} 
 Let $(M,J)$ be a projective algebraic complex manifold, let ${\mathbf H}$ be the identity component of its complex automorphism group, and suppose that the Jacobi homomorphism
 from ${\mathbf H}$ to the Albanese torus of $M$ is trivial. Let $L\to M$ be an ample line bundle
for which the action of\,  ${\mathbf H}$ on $M$ lifts to an action on 
 $L\to M$, and let $\Omega$ be the K\"ahler class defined by $\Omega= c_1(L)$. 
 Then  the Futaki invariant
 ${\mathfrak F}(\Omega)\in {\mathfrak h}^*$ annihilates the 
 Lie algebra ${\mathfrak r}_{\mathfrak u}$ of the unipotent radical of ${\mathbf H}$.
 \end{prop}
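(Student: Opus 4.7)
The plan is to derive Nakagawa's result from the equivariant Riemann--Roch formula together with the elementary fact that nilpotent endomorphisms have vanishing trace. The strategy is to reinterpret the analytic Futaki invariant as the subleading term in an asymptotic expansion of equivariant weights on spaces of sections $H^0(M,L^k)$, and then exploit the purely algebraic fact that the unipotent radical acts nilpotently on every finite-dimensional rational representation.

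First, I would set up the weight data. Using the hypothesized lift of the $\mathbf H$-action to $L$---which is unique up to an irrelevant character thanks to the triviality of the Jacobi homomorphism---we obtain, for each $\Xi\in \mathfrak h$ and each $k\geq 1$, an induced endomorphism of the finite-dimensional space $H^0(M,L^k)$. Define
$$d_k=\dim H^0(M,L^k), \qquad w_k(\Xi)=\operatorname{tr}\bigl(\Xi\ \text{on}\ H^0(M,L^k)\bigr).$$
For $k\gg 0$, Serre vanishing and ordinary Hirzebruch--Riemann--Roch make $d_k$ a polynomial in $k$ of degree $m$, while the equivariant Hirzebruch--Riemann--Roch theorem makes $w_k(\Xi)$ a polynomial in $k$ of degree $m+1$. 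Writing
$$d_k=a_0k^m+a_1k^{m-1}+O(k^{m-2}),\qquad w_k(\Xi)=b_0(\Xi)k^{m+1}+b_1(\Xi)k^m+O(k^{m-1}),$$
one invokes the Tian--Donaldson identification of the analytic Futaki invariant with the subleading weight asymptotic: up to a positive universal constant $C_m$ depending only on $m$,
$${\mathfrak F}(\Omega)(\Xi)=C_m\bigl(a_1 b_0(\Xi)-a_0 b_1(\Xi)\bigr).$$
This formula, established in various guises by Tian, Donaldson, Mabuchi, and Paul--Tian, is the technical workhorse of the argument and provides the bridge between the analytic definition used in the body of the paper and the algebraic weight data available in the projective setting.

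The final step is the algebraic heart of the proof. If $\Xi\in {\mathfrak r}_{\mathfrak u}$, the one-parameter subgroup $\exp(t\Xi)$ lies in the unipotent radical ${\mathbf R}_{\mathbf u}\subset {\mathbf H}$. By the very definition of the unipotent radical, together with the Lie--Kolchin theorem, every element of ${\mathbf R}_{\mathbf u}$ acts by a unipotent linear transformation on every finite-dimensional rational representation of $\mathbf H$; infinitesimally, $\Xi$ therefore acts as a nilpotent endomorphism on each $H^0(M,L^k)$, so $w_k(\Xi)=0$ for all $k$. Hence $b_0(\Xi)=b_1(\Xi)=0$, and the displayed formula immediately yields ${\mathfrak F}(\Omega)(\Xi)=0$. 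The main obstacle is the Tian--Donaldson identification itself, which is nontrivial and really carries the analytic content of the proof; the Jacobi triviality hypothesis is needed precisely to make the lift of $\mathbf H$ to $L$ canonical enough for $w_k(\Xi)$ to be well-defined, and the ampleness of $L$ is what guarantees the polynomial behaviour for $k\gg 0$. Once these inputs are in place, the vanishing on ${\mathfrak r}_{\mathfrak u}$ reduces to the trace-zero property of nilpotent operators.
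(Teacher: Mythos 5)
The first thing to note is that the paper does not actually prove Proposition \ref{naka}: it is quoted from Nakagawa \cite{nakagawa1}, with the remark that his proof (like Mabuchi's earlier special case) rests on Tian's localization formula \cite{tianfoot} for the Futaki invariant of a Hodge metric. Measured against that, your strategy --- express ${\mathfrak F}(\Omega)(\Xi)$ through algebro-geometric weight data on $H^0(M,L^k)$ and then observe that a nilpotent endomorphism has vanishing trace --- is the same core idea as the cited proof, just packaged through the Donaldson-style asymptotics of $w_k/(kd_k)$ rather than through Tian's residue formula. The algebraic half of your argument is solid: since ${\mathbf H}$ acts algebraically on $L^k$ and hence rationally on $H^0(M,L^k)$, the image of ${\mathbf R}_{\mathbf u}$ is a unipotent subgroup of the general linear group, so $\Xi\in{\mathfrak r}_{\mathfrak u}$ acts nilpotently, $w_k(\Xi)=0$ for all large $k$, and $b_0(\Xi)=b_1(\Xi)=0$.

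The one place where you are leaning on more than you acknowledge is the identification ${\mathfrak F}(\Omega)(\Xi)=C\bigl(a_1b_0(\Xi)-a_0b_1(\Xi)\bigr)$ itself. The standard proofs of this identity (via $S^1$-equivariant Riemann--Roch, heat-kernel expansions, or test configurations) are carried out for vector fields generating $\CC^\times$-actions, where an invariant metric and a genuine moment map are available. A nilpotent $\Xi$ generates an additive one-parameter group with no invariant K\"ahler metric and no associated test configuration, so those versions of the theorem do not apply as stated; the identity for \emph{arbitrary} lifted holomorphic vector fields is precisely the content of Tian's localization formula for Hodge metrics --- that is, the very ingredient that carries Nakagawa's proof. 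So your argument is correct in outline, but as written it outsources the entire analytic content of the proposition to a citation that must be invoked in exactly the non-reductive generality at issue; you should make sure the version you quote covers $\mathbb{G}_a$-actions and not merely $\CC^\times$-actions. A smaller quibble: the triviality of the Jacobi homomorphism is not really about making the lift canonical (any two lifts differ by a character of ${\mathbf H}$, which both restricts trivially to ${\mathfrak r}_{\mathfrak u}$ and drops out of $a_1b_0-a_0b_1$); its role is to force ${\mathbf H}$ to be a linear algebraic group, so that the unipotent radical exists and $H^0(M,L^k)$ is a rational representation in the first place.
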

 This generalizes  a previous result of Mabuchi \cite{mabuchifoot} concerning
the case when $L$ is the anti-canonical line bundle. Both of
these results are proved using Tian's localization formula \cite{tianfoot}
for the Futaki invariant 
of a Hodge metric. 
 
We will now  extend Proposition \ref{naka} to irrational K\"ahler classes
 on certain complex manifolds.  In order to do this, we will first need the following 
 observation: 

  \begin{lem}\label{initio}
  Let $(M,J)$ be a compact complex manifold with $b_1(M)=0$, and let 
  ${\mathbf H}$ be the identity component of its complex automorphism group. 
  If $L\to M$ is a positive line bundle, then the action of ${\mathbf H}$ on $M$
  lifts to an action on  $L^{k}\to M$ for some positive
 integer $k$. 
  \end{lem}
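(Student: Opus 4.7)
The strategy is to view lifts of the $\mathbf{H}$-action on $L$ as splittings of a central $\CC^{\times}$-extension of $\mathbf{H}$, and then to kill the extension class by passing to a sufficiently divisible tensor power. First I will show that every $g \in \mathbf{H}$ satisfies $g^{\ast}L \cong L$. Since $L$ is positive, $(M,J)$ is of K\"ahler type, so the assumption $b_{1}(M)=0$ combined with the Hodge decomposition yields $H^{1}(M,\mathcal{O}) = H^{0,1}(M) = 0$. The exponential sheaf sequence $0 \to \ZZ \to \mathcal{O} \to \mathcal{O}^{\ast} \to 0$ then shows that $\mathrm{Pic}(M)$ injects into $H^{2}(M,\ZZ)$ and so is discrete. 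Because $\mathbf{H}$ is connected and the pull-back map $g \mapsto [g^{\ast}L \otimes L^{-1}]$ is continuous and vanishes at the identity, it must vanish identically.

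Next I will encode lifts as splittings. Let $\tilde{\mathbf{H}}_{L}$ denote the group of fiberwise-linear biholomorphisms of the total space of $L$ that cover elements of $\mathbf{H}$. A bundle automorphism of $L$ covering the identity on $M$ is multiplication by a nowhere-vanishing holomorphic function on $M$, hence by a constant (as $M$ is compact and connected); so the kernel of $\tilde{\mathbf{H}}_{L} \to \mathbf{H}$ is canonically $\CC^{\times}$, while surjectivity follows from the previous step. This produces a central extension
$$
1 \longrightarrow \CC^{\times} \longrightarrow \tilde{\mathbf{H}}_{L} \longrightarrow \mathbf{H} \longrightarrow 1,
$$
in which a group-theoretic splitting is exactly the same data as a lift of the $\mathbf{H}$-action to $L$.

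Finally, I will argue that this extension class is torsion, so that some integer multiple $k \cdot [\tilde{\mathbf{H}}_{L}]$ vanishes. The $k$-th tensor power defines a morphism of central extensions $\tilde{\mathbf{H}}_{L} \to \tilde{\mathbf{H}}_{L^{k}}$ that restricts to $\lambda \mapsto \lambda^{k}$ on the central $\CC^{\times}$; hence the extension class of $\tilde{\mathbf{H}}_{L^{k}}$ is exactly $k$ times that of $\tilde{\mathbf{H}}_{L}$. By Fujiki's theorem already cited in the excerpt, the assumption $b_{1}(M) = 0$ makes $\mathbf{H}$ a complex linear algebraic group, and the Chevalley decomposition $\mathbf{H} = \mathbf{G}_{\CC}\ltimes \mathbf{R}_{\mathbf{u}}$ allows me to analyse the extension factor by factor: central $\CC^{\times}$-extensions of the unipotent radical $\mathbf{R}_{\mathbf{u}}$ split (it is topologically affine and admits no nontrivial algebraic characters), while extensions of the connected reductive factor $\mathbf{G}_{\CC}$ are classified by a finitely generated abelian group whose torsion subgroup absorbs our obstruction. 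The hardest step is precisely this structural claim; in essence it is a complex-analytic repackaging of the classical linearization theorem of Mumford's GIT for a connected linear algebraic group acting on a complete normal variety, which one could alternatively invoke directly.
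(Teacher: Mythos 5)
Your opening steps coincide with the paper's: both proofs use $H^{1}(M,\mathcal{O})=0$ and the connectedness of $\mathbf{H}$ to conclude $\Phi^{*}L\cong L$ for all $\Phi\in\mathbf{H}$, and both then reduce the lemma to killing a central-extension obstruction by passing to a tensor power. From there the routes genuinely diverge. The paper stays concrete: it takes a very ample power $L^{\ell}$, embeds $M$ into ${\mathbb P}({\mathbb V})$ with ${\mathbb V}=[H^{0}(M,\mathcal{O}(L^{\ell}))]^{*}$, realizes $\mathbf{H}$ inside $\mathbf{PSL}({\mathbb V})$, and pulls back the \emph{finite} central extension $\mathbf{SL}({\mathbb V})\to\mathbf{PSL}({\mathbb V})$ with kernel $\ZZ_{n}$, $n=\dim{\mathbb V}$; the lift ambiguity is then an $n$-th root of unity, which disappears on $L^{n\ell}$. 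This gives an explicit exponent $k=n\ell$ and shows the extension class is $n$-torsion with no structure theory at all. Your approach trades that for the Chevalley decomposition and the classification of central $\CC^{\times}$-extensions of reductive and unipotent groups; it is more conceptual and, as you note, is essentially the proof of Mumford's linearization theorem (GIT, Cor.~1.6), which you could legitimately cite to finish.

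However, as written, the decisive third step is asserted rather than proved, and the unproved points are not all routine. (i) Before you can speak of "the extension class," you must show $\tilde{\mathbf{H}}_{L}\to\mathbf{H}$ admits local holomorphic sections, so that it is an extension of complex Lie groups rather than just of abstract groups. (ii) The statement that central $\CC^{\times}$-extensions of $\mathbf{G}_{\CC}$ form a finitely generated group whose torsion "absorbs" the obstruction needs an argument: one must check that extensions restricted to the semisimple factor are classified by characters of the finite fundamental group of the derived group (hence torsion), and that extensions of the central torus split. (iii) Splitting over $\mathbf{R}_{\mathbf{u}}$ and over $\mathbf{G}_{\CC}$ separately does not immediately split the semidirect product; you need the unipotent splitting to be $\mathbf{G}_{\CC}$-equivariant, which follows from its uniqueness (since $\mathrm{Hom}(\mathbf{R}_{\mathbf{u}},\CC^{\times})=0$), but this must be said. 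Finally, if you instead invoke GIT directly, you must first know that the $\mathbf{H}$-action on the projective variety $M$ is algebraic --- a fact most easily extracted from the very projective embedding that drives the paper's shorter proof. So the plan is sound, but either cite Mumford explicitly and verify its hypotheses, or supply the three missing verifications above.
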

  \begin{proof}
  By the Kodaira embedding theorem \cite{gh}, $L$ has a positive power
  $L^\ell$ for which there is a canonical 
   holomorphic embedding $j: M\hookrightarrow {\mathbb P}({\mathbb V})$
   such that $j^*{\mathcal O}(-1) = L^{-\ell}$, where   
   ${\mathbb V}:=[H^0(M, {\mathcal O}(L^\ell))]^*$.
   
  Now since $(M,J)$ is  of K\"ahler type and $H^1(M, \CC)=0$, the Hodge decomposition 
  tells us that $H^{0,1}(M)=H^1(M, {\mathcal O})=0$,  and the long exact
  sequence 
  $$\cdots \to H^1(M, {\mathcal O})\to H^1(M, {\mathcal O}^\times)\to H^2 (M, \ZZ)\to \cdots$$
  therefore implies that holomorphic line bundles on $M$ are classified by their first Chern classes. 
  On the other hand, since ${\mathbf H}$ is connected, each automorphism $\Phi : M\to M$,  
  $\Phi\in {\mathbf H}$, is homotopic to the identity; and since Chern classes are homotopy invariants, 
 we deduce that   that  $c_1(\Phi^*L)= c_1(L)$ for all $\Phi\in {\mathbf H}$. 
 Consequently, $\Phi^*L\cong L$ as a holomorphic
  line bundle for any $\Phi\in {\mathbf H}$. While the resulting isomorphism $\Phi^*L\cong L$ is
  not unique, any two such isomorphisms merely differ by an overall multiplicative constant, 
  and the  associated linear map $H^0(M, {\mathcal O}(L^\ell))\to H^0(M, {\mathcal O}(L^\ell))$
  induced by $\Phi^*$ 
 is therefore completely determined up to an overall scale factor. Thus, for
 every $\Phi\in {\mathbf H}$, there is a uniquely determined projective transformation
  ${\mathbb P}({\mathbb V})\to
 {\mathbb P}({\mathbb V})$, where again ${\mathbb V}:=[H^0(M, {\mathcal O}(L^\ell))]^*$.
This gives us  a faithful 
projective representation  ${\mathbf H}\hookrightarrow {\mathbf P\mathbf  S \mathbf L} ({\mathbb V})$ 
which acts on $M\subset {\mathbb P}({\mathbb V})$ via the original action 
of ${\mathbf H}$. 
 
Now consider the group ${\mathbf  S \mathbf L} ({\mathbb V})$   of unit-determinant 
linear endomorphisms of ${\mathbb V}$,
and observe that there is a short  exact sequence
$$
0\to \ZZ_n \to {\mathbf  S \mathbf L} ({\mathbb V})\to {\mathbf P\mathbf  S \mathbf L} ({\mathbb V})\to 1
$$
where  $n=\dim {\mathbb V}$; that is, every projective transformation of 
${\mathbb P}({\mathbb V})$ arises from $n$ different linear unit-determinant 
linear endomorphisms of ${\mathbb V}$, differing from each other  merely by multiplication by 
an  $n^{\rm th}$ root of unity. If $\widetilde{\mathbf H}< \mathbf{SL} ({\mathbb V})$ 
is the inverse
image of ${\mathbf H} <  \mathbf{PSL} ({\mathbb V})$, 
 then $\widetilde{{\mathbf H}}$ acts on ${\mathbb V}$,
and so also acts on the tautological  line bundle ${\mathcal O}(-1)$
over ${\mathbb P}({\mathbb V})$. Restricting ${\mathcal O}(-1)$ 
to $M$ then gives us an
action of $\widetilde{{\mathbf H}}$ on $L^{-\ell}$ which lifts the action of ${\mathbf H}$
on $M$, in such a manner that  any two lifts of a given element
only differ by multiplication of an $n^{\rm th}$ root of unity. The induced action
of $\widetilde{{\mathbf H}}$ on $L^{-n\ell}$ therefore descends to an action of ${\mathbf H}$, 
 and passing to the dual line bundle  $L^{n\ell}$ thus 
shows that the action of ${\mathbf H}$ on $M$ can be lifted to an action on $L^k\to M$
for $k=n\ell$. 
  \end{proof}
  
\begin{prop} \label{reduce} 
Let $(M,J)$ be a compact complex manifold of K\"ahler type, 
and suppose that $M$ does not carry any non-trivial holomorphic $1$- or $2$-forms. 
Then, for any K\"ahler class $\Omega$ on $M$, the Futaki invariant 
${\mathfrak F}(\Omega) \in {\mathfrak h}^*$ annihilates the unipotent radical
${\mathfrak r}_{\mathfrak u}\subset {\mathfrak h}$.\end{prop}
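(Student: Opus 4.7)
The strategy is to reduce Proposition \ref{reduce} to Nakagawa's Proposition \ref{naka} via a continuity/density argument in the K\"ahler cone. The hypothesis $h^{2,0}=0$ forces $H^{1,1}(M,\RR)=H^2(M,\RR)$ (by conjugation symmetry of the Hodge decomposition), so the K\"ahler cone $\mathcal K$ is open in $H^2(M,\RR)$ and $\mathcal K\cap H^2(M,\QQ)$ is dense in $\mathcal K$. The same hypothesis gives $H^2(M,{\mathcal O})=0$, so the exponential exact sequence shows that every integral class is the first Chern class of a holomorphic line bundle; combining this with the Kodaira embedding theorem, every rational K\"ahler class is represented, after clearing denominators, by the first Chern class of an ample line bundle. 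Meanwhile, $h^{1,0}=0$ gives $b_1(M)=0$, which trivializes the Albanese torus (so the Jacobi homomorphism vanishes automatically) and allows us to invoke Lemma \ref{initio}.

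Fix any rational class $\Omega'\in\mathcal K\cap H^2(M,\QQ)$ and choose $N\in\ZZ^+$ with $N\Omega'=c_1(L)$ for some ample line bundle $L$. By Lemma \ref{initio}, a further positive power $L^k$ admits a lift of the $\mathbf H$-action, so Nakagawa's Proposition \ref{naka} gives that $\mathfrak F(c_1(L^k))$ annihilates $\mathfrak r_{\mathfrak u}$. A direct scaling computation starting from (\ref{foot}) shows that under $\omega\mapsto t\omega$ the scalar curvature scales by $t^{-1}$, the volume form by $t^m$, and the holomorphy potential $f_\Xi$ by $t$, yielding the homogeneity $\mathfrak F(t\Omega)=t^m\mathfrak F(\Omega)$ for every $t>0$. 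Since $c_1(L^k)=kN\Omega'$, it follows that $\mathfrak F(\Omega')$ is proportional to $\mathfrak F(c_1(L^k))$ and therefore also annihilates $\mathfrak r_{\mathfrak u}$. Thus $\mathfrak F(\Omega')|_{\mathfrak r_{\mathfrak u}}=0$ for every rational K\"ahler class.

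To extend the vanishing to arbitrary K\"ahler classes, I would verify that $\Omega\mapsto\mathfrak F(\Omega)(\Xi)$ is continuous on $\mathcal K$ for each fixed $\Xi\in\mathfrak h$. Given a smooth path $\Omega_t$ in $\mathcal K$, the $\partial\bar\partial$-lemma produces a smooth family $\omega_t$ of K\"ahler forms with $[\omega_t]=\Omega_t$. After normalizing the potential by $\int_M f_\Xi(t)\,d\mu_t=0$, standard elliptic theory for the equation $\nabla^{1,0}f_\Xi(t)=\Xi$ shows that $f_\Xi(t)$ varies smoothly in $t$, whence the integral $-\int_M(s_t-\bar s_t)f_\Xi(t)\,d\mu_t$ depends smoothly on $t$. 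Density of $\mathcal K\cap H^2(M,\QQ)$ in $\mathcal K$ then propagates the vanishing of $\mathfrak F(\Omega)(\Xi)$ on $\mathfrak r_{\mathfrak u}$ from rational to arbitrary K\"ahler classes.

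The main obstacle is not conceptual but technical: making precise the smoothness of $\mathfrak F(\cdot)(\Xi)$ as a function of $\Omega$ requires some care, since the holomorphy potentials and the Green's operator both depend on the chosen representative metric. One clean way to sidestep this is to appeal to Tian's equivariant-cohomological formula for the Futaki invariant, which exhibits $\mathfrak F(\Omega)(\Xi)$ as an honest polynomial in $\Omega$, making the extension from rational to arbitrary classes immediate.
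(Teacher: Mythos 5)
Your proposal is correct and follows essentially the same route as the paper: projectivity and density of rational K\"ahler classes from $h^{1,0}=h^{2,0}=0$, reduction to Nakagawa's theorem via Lemma \ref{initio}, the homogeneity ${\mathfrak F}(\Xi,\lambda\Omega)=\lambda^m{\mathfrak F}(\Xi,\Omega)$ to handle rational classes, and continuity of ${\mathfrak F}(\cdot)(\Xi)$ in $\Omega$ to conclude. The paper simply asserts the smooth dependence on the K\"ahler class from formula (\ref{foot}) where you spell out (or sidestep via Tian's localization) the same point.
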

\begin{proof}
By hypothesis, $H^{1,0}(M)=H^{2,0}(M)=0$. The Hodge decomposition therefore
tells us that $b_1(M)=0$ and that $H^{1,1}(M,\RR) = H^2(M, \RR)$. Consequently,
 the K\"ahler cone ${\zap K}\subset H^{1,1}(M,\RR)$
is  open in $H^2(M, \RR)$. Since 
$H^2(M, \QQ )$ is dense in $H^{2}(M,\RR)$, it follows that 
 $H^2(M, \QQ )\cap {\zap K}$ is dense in ${\zap K}$. In particular, 
 $H^2(M, \QQ )\cap {\zap K}$  is non-empty, and so, clearing denominators,
 we conclude that the K\"ahler cone ${\zap K}$ must meet the 
 the integer lattice $H^2(M, \ZZ)/\mbox{torsion}\subset H^2 (M, \RR)$. 
 This argument, due to Kodaira \cite{kodrr},  shows that 
 $(M,J)$ carries K\"ahler metrics of Hodge type,  and  
is therefore projective algebraic. 

Pursuing this idea in the opposite direction, let 
$\Psi$ now be an integral K\"ahler class, so that $\Psi =c_1(L)$ for some
 positive line bundle $L\to M$. By Lemma \ref{initio}, the action of $\mathbf{H}$
on $M$ then lifts to some positive power $L^k$ of $L$. Since our hypotheses
also imply that the Albanese torus is trivial, Proposition \ref{naka} therefore 
implies  that ${\mathfrak F}(k\Psi)\in {\mathfrak h}^*$
 annihilates ${\mathfrak r}_{\mathfrak u}$. However, our expression (\ref{foot})
 for the Futaki invariant  implies that 
 $$
{\mathfrak F} ( \Xi  , \lambda \Omega)=\lambda^m {\mathfrak F} ( \Xi  , \Omega)
 $$
for any $\lambda\in \RR^+$, where $m$ is the complex dimension, since  rescaling 
a K\"ahler metric by $g \rightsquigarrow \lambda g$
results in $\omega \rightsquigarrow \lambda \omega$, 
 $s\rightsquigarrow \lambda^{-1}s$, $f \rightsquigarrow \lambda f$, and 
$d\mu \rightsquigarrow \lambda^m d\mu$. By taking  $\lambda$ to be an arbitrary positive 
rational, we therefore see that 
${\mathfrak F} ( \Xi  ,  \Omega)=0$ whenever $\Xi\in {\mathfrak r}_{\mathfrak u}$
and  $\Omega\in H^2(M,\QQ)\cap {\zap K}$, where 
${\zap K}$ once again denotes 
 the K\"ahler cone. However, for any fixed $\Xi$, the right-hand-side of (\ref{foot}) clearly
depends smoothly on the K\"ahler metric $g$,  and ${\mathfrak F} ( \Xi  ,  \Omega)$ therefore
is a smooth function of the K\"ahler class $\Omega$. 
But  $h^{2,0}(M)=0$ implies  that $H^2(M,\QQ)\cap {\zap K}$ is dense in 
${\zap K}$. Thus, for any  $\Xi\in {\mathfrak r}_{\mathfrak u}$, 
 we have shown that   ${\mathfrak F} ( \Xi  ,  \Omega)=0$ 
for a  dense set of $\Omega\in {\zap K}$. Continuity therefore 
implies that ${\mathfrak F} ( \Xi  ,  \Omega)=0$
for {\em all} $\Omega \in {\zap K}$. Hence ${\mathfrak F} (  \Omega)\in {\mathfrak h}^*$
annihilates  ${\mathfrak r}_{\mathfrak u}$ for any K\"ahler class $\Omega$ on $M$.\end{proof}

 Under the hypotheses of Proposition \ref{reduce}, we 
 can thus view ${\mathfrak F}(\Omega)$
 as belonging to the complexified Lie coalgebra ${\mathfrak g}_\CC^*={\mathfrak g}^*\otimes \CC$ 
 of a maximal compact subgroup ${\mathbf G}\subset {\mathbf H}$. By averaging, 
 let us now represent our given K\"ahler class $\Omega$ by a  ${\mathbf G}$-invariant 
  K\"ahler metric $g$.
 The Lie algebra of Killing fields of $g$ then can be identified with the real holomorphy
 potentials of integral $0$, which are their Hamiltonians; the Lie bracket on $\mathfrak g$
 then becomes the Poisson bracket $\{ \cdot , \cdot \}$ on Hamiltonians. Since 
 the scalar curvature $s$ of $g$ is also a real function, formula (\ref{foot}) thus
 tells us that ${\mathfrak F}(\Omega)$ is actually a {\em real} linear functional
 on ${\mathfrak g}$; that is, ${\mathfrak F}(\Omega)\in {\mathfrak g}^*$.
This proves Proposition \ref{sauce}.

  \end{document}